\newtheorem*{theorem-main}{Theorem~\ref{main.thm}}
\newtheorem{theorem}{Theorem}[section]
\newtheorem{proposition}[theorem]{Proposition}
\newtheorem{lemma}[theorem]{Lemma}
\newtheorem{corollary}[theorem]{Corollary}
\newtheorem{example}[theorem]{Example}
\newtheorem{Definition}[theorem]{Definition}
\theoremstyle{remark}
\theoremstyle{remark}
\newtheorem{remark}[theorem]{Remark}\def\({{\rm (}}
\def\){{\rm )}}
\let\Mathrm\operator@font
\let\Bbb\mathbb
\newcommand{\fm}{\ensuremath{\mathfrak m}}
\newcommand{\fp}{\ensuremath{\mathfrak p}}
\def\standop#1{\mathop{\Mathrm #1}\nolimits}
\def\difstop#1#2{\expandafter\def\csname #1\endcsname{\standop{#2}}}
\def\defstop#1{\difstop{#1}{#1}}
\newcommand{\HH}[2]{\operatorname{H}_{#1}(#2)}
\newcommand{\HHH}[2]{\operatorname{H}^{#1}(#2)}
\def\({{\rm(}}
\def\){{\rm)}}
\def\fm{\mathfrak{m}}
\def\fp{\mathfrak{p}}
\def\sdarrow#1{\downarrow\hbox to 0pt{\scriptsize$#1$\hss}}
\def\suarrow#1{\uparrow\hbox to 0pt{\scriptsize$#1$\hss}}
\def\ssearrow#1{\searrow\hbox to 0pt{\scriptsize$#1$\hss}}
\def\section{\@startsection{section}{1}{\z@ }%
  {-3.5ex plus -1ex minus -.2ex}{2.3ex plus .2ex}{\bf }}
\long\def\refname{\par\kern -3ex
  \begin{center}\rm R\sc{eferences}\end{center}\par\kern 
  -2ex}
\def\@seccntformat#1{\csname the#1\endcsname.\quad}
\def\@@@sect#1#2#3#4#5#6[#7]#8{%
  \ifnum #2>\c@secnumdepth 
  \def \@svsec {}\else \refstepcounter {#1}%
  \def\@svsec{}
  \fi 
  \@tempskipa #5\relax 
  \ifdim \@tempskipa >\z@ 
  \begingroup #6\relax \@hangfrom {\hskip #3\relax 
    \@svsec}{\interlinepenalty \@M #8\par }\endgroup 
  \csname #1mark\endcsname {#7}
  \else 
  \def \@svsechd {#6\hskip #3\@svsec #8\csname #1mark\endcsname {#7}}
  \fi \@xsect {#5}}
\def\@@@startsection#1#2#3#4#5#6{%
  \if@noskipsec \leavevmode \fi \par \@tempskipa #4\relax \@afterindenttrue 
  \ifdim \@tempskipa <\z@ \@tempskipa -\@tempskipa \@afterindentfalse 
  \fi \if@nobreak \everypar {}\else \addpenalty {\@secpenalty }\addvspace 
  {\@tempskipa }\fi \@ifstar {\@ssect {#3}{#4}{#5}{#6}}{\@dblarg 
    {\@@@sect {#1}{#2}{#3}{#4}{#5}{#6}}}}
\def\theparagraph{\thesection.\arabic{paragraph}}
\def\aparagraph{\@@@startsection{paragraph}{2}{\z@ }%
  {1.75ex plus .2ex minus .15ex}{-1em}{\bf(\theparagraph) } }
\def\paragraph{\@@@startsection{paragraph}{2}{\z@ }%
  {1.75ex plus .2ex minus .15ex}{-1em}{}{\bf(\theparagraph)} }
\let\c@theorem\c@paragraph
\title{Acyclicity test of complexes modulo Serre subcategories using the residue fields}
\author{M{\sc itsuyasu} H{\sc ashimoto}\thanks{Partially supported by MEXT Promotion of Distinctive Joint Research Center Program JPMXP0723833165.}
  \and
  X{\sc i} T{\sc ang}\thanks{Corresponding author.}}
\date{}
\begin{document}

\maketitle
\footnote[0]
{2020 \textit{Mathematics Subject Classification}. 
  Primary 13D05; Secondary 13C11.
  Keywords: relatively flat module; Serre subcategory; spectral sequence
}

\begin{abstract}
 Let $R$ be a commutative noetherian ring, and let $\mathscr{S}$(resp. $\mathscr{L}$) be a  Serre(resp. localizing) subcategory of the category of $R$-modules.  If $\Bbb F$ is an unbounded complex of $R$-modules Tor-perpendicular to $\mathscr{S}$ and $d$ is an integer, then $\HH{i\geqslant d}{S\otimes_R \Bbb F}$ is in $\mathscr{L}$  for each $R$-module $S$ in $\mathscr{S}$ if and only if $\HH{i\geqslant d}{k(\fp)\otimes_R \Bbb F}$ is in $\mathscr{L}$  for each prime ideal $\fp$ such that $R/\fp$ is in $\mathscr{S}$, where $k(\fp)$ is the residue field at $\fp$. As an application,  we show that for any $R$-module $M$,  $\Tor_{i\geqslant 0}^R(k(\fp),M)$ is in $\mathscr{L}$ for each prime ideal $\fp$ such that $R/\fp$ is in $\mathscr{S}$ if and only if $\Ext^{i \geqslant 0}_R(S,M)$ is in $\mathscr{L}$ for each cyclic $R$-module $S$ in $\mathscr{S}$. We also obtain some new characterizations of regular and Gorenstein rings in the case of $\mathscr{S}$ consists of finite modules with supports in a specialization-closed subset $V(I)$ of $\Spec R$.
 \end{abstract}

\section{Introduction}
\paragraph 
Throughout this paper, $R$ denotes a commutative noetherian ring.  We consider the spectrum $\Spec R$ of prime ideals of $R$ and its maximal spectrum $\Max R$ of maximal ideals of $R$.  For $\fp\in \Spec R$,  we denote by  $k(\fp):=R_{\fp}/{\fp R_{\fp}}$ its residue field.
There is plenty of literature on testing flatness and injectivity of modules in terms of vanishing of Tor and Ext with coefficients in residue fields $k(\fp)$(see \cite{AF, CFT, CIM, H2}). A result of interest to us in this paper is the following theorem.
\begin{theorem}\label{*Hashimoto.thm}
Let $$\Bbb F: \cdots \xrightarrow{d_2}  F_{1}\xrightarrow{d_1} F_0\xrightarrow{} 0$$
be a complex of flat $R$-modules. If  $\HH{i}{k(\fp)\otimes_R \Bbb F}=0$  for all $i\geqslant 1$ and each $\fp\in \Spec R$, then 
$\HH{i}{\Bbb F}=0$  for all $i\geqslant 1$ and $\HH{0}{\Bbb F}$ is $R$-flat. In particular, $\HH{i}{M\otimes_R \Bbb F}=0$  for all $i\geqslant 1$ for every $R$-module $M$.
\end{theorem}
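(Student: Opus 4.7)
The plan is to prove the apparently stronger claim that $\HH{i}{M\otimes_R\Bbb F}=0$ for every $R$-module $M$ and every $i\geqslant 1$. Taking $M=R$ gives $\HH{i}{\Bbb F}=0$ for $i\geqslant 1$, so $\Bbb F$ is a flat resolution of $\HH{0}{\Bbb F}$; consequently $\Tor_i^R(M,\HH{0}{\Bbb F})=\HH{i}{M\otimes_R\Bbb F}=0$ for all $M$ and $i\geqslant 1$, which forces $\HH{0}{\Bbb F}$ to be flat and yields the ``in particular'' assertion.

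A standard d\'evissage reduces the stronger claim to the case $M=R/\fp$ for a single prime $\fp$. Every module is a filtered colimit of finitely generated submodules, and both $-\otimes_R\Bbb F$ and homology commute with filtered colimits, so $M$ may be taken finitely generated. A prime filtration $0=M_0\subset M_1\subset\cdots\subset M_n=M$ with subquotients $R/\fp_j$, together with iterated long exact sequences (the short exact sequences remain exact after tensoring because each $F_i$ is flat), then reduces the claim to $M=R/\fp$.

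The core is Noetherian induction on $\fp$. The base case, $\fp$ maximal, holds by hypothesis since then $R/\fp=k(\fp)$. For the inductive step, assume vanishing on $R/\fq$ for every $\fq\supsetneq\fp$; by the d\'evissage this also gives vanishing on every finitely generated module supported in $V(\fp)\setminus\{\fp\}$. For any $r\in R\setminus\fp$, the short exact sequence
\[0\to R/\fp\xrightarrow{\,r\,}R/\fp\to R/(\fp+rR)\to 0\]
stays exact after $-\otimes_R\Bbb F$, and $\HH{i}{R/(\fp+rR)\otimes_R\Bbb F}=0$ for $i\geqslant 1$ (trivially when $\fp+rR=R$, and otherwise by the above, since every prime in $\Supp R/(\fp+rR)$ strictly contains $\fp$). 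The long exact sequence then forces multiplication by $r$ to be an automorphism of $\HH{i}{R/\fp\otimes_R\Bbb F}$ for each $i\geqslant 1$. As this holds for every $r\in R\setminus\fp$, the module $\HH{i}{R/\fp\otimes_R\Bbb F}$ equals its localization at $\fp$, namely $\HH{i}{k(\fp)\otimes_R\Bbb F}$, which vanishes by hypothesis.

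The main obstacle is the interleaving of the two inductive mechanisms: the inductive step for $\fp$ needs vanishing on $R/(\fp+rR)$, which in turn uses the d\'evissage applied to primes strictly above $\fp$, so the dévissage and the Noetherian induction must be organized in tandem. Minor technical checks are the preservation of short exact sequences after tensoring with $\Bbb F$ (which requires termwise flatness), and the elementary fact that a module on which every element of $R\setminus\fp$ acts invertibly is canonically its own localization at $\fp$.
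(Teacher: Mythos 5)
Your argument is correct, and it is essentially the route the paper takes: the paper quotes \Cref{*Hashimoto.thm} from \cite{H2} and instead proves the generalization \Cref{*main-flat.thm}, whose proof (specialized to $\mathscr{S}=\Mod R$, $\mathscr{L}=0$, $d=1$) likewise reduces by Noetherian d\'evissage to cyclic modules $R/\fp$ and then kills $\HH{i}{R/\fp\otimes_R \Bbb F}$ by comparing it with its localization $\HH{i}{k(\fp)\otimes_R \Bbb F}$ via multiplication by elements of $R\setminus\fp$. The only differences are organizational: the paper runs a maximal-counterexample argument on the set of ideals (using associated primes to show the maximal offender is prime) and needs only injectivity of the map into the localization, whereas you use prime filtrations, Noetherian induction on primes, and bijectivity of the multiplication maps.
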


\paragraph  The theorem above implies that acyclicity of a flat complex can be detected by vanishing of the left derived functor of $k(\fp)\otimes_R-$.  From another point of view, it is easily seen that the residue fields are useful in testing acyclicity of a flat complex in the quotient of module category by the subcategory merely consists of zero module. Motivated by this observation, the aim of this paper is to investigate how the residue fields determine acyclicity of a complex in Serre quotient of module category. 

\paragraph 
We denote by $\Mod R$ the category of $R$-modules  and by  $\mod R$ the full subcategory of finitely generated $R$-modules.  Let $M\in \Mod R$,  $E(M)$ denotes the injective hull of $M$ and we use  $\projdim_R M$, $\injdim_R M$ and $\flatdim_R M$ to denote the projective, injective and flat dimensions of $M$ respectively.  For unexplained terminology from homological and commutative algebra we refer to \cite{BH},  \cite{EJ} and \cite{Ma} .

\paragraph 
A class $\mathscr{S}$ of $\Mod R$  is called a {\em Serre subcategory} of $\Mod R$ if it is closed under taking submodules, factor modules, extensions. A Serre subcategory $\mathscr{L}$ of $\Mod R$ which is closed under taking arbitrary direct sums is called {\em localizing}(see \cite{Kra}). There are bijections between the set of localizing subcategories of $\Mod R$, the set of Serre subcategories of $\mod R$ and the set of specialization-closed subsets of $\Spec R$(see \cite{Ga}).

\paragraph
From now on,  $\mathscr{S}$(resp.  $\mathscr{L}$) will denote a Serre(resp. localizing) subcategory of $\Mod R$.

\paragraph 
 Let $\mathscr{X}$ be a subcategory of $\Mod R$, we consider the {\em Tor-perpendicular} category with respect to $\mathscr{X}$:
$${\mathscr{X}^{\perp}}:=\{M\in\Mod R\mid\Tor^R_{\geqslant 1}(X,M)=0,  \,\,\text{for any $X\in \mathscr{X}$} \}.$$ 

\paragraph
We recall a notation in \cite{Ta},  let $S(R):=\{\fp \in \Spec R\mid R_{\fp}  \,\,\text{is not a field}\}.$ The main result of this paper is the following,  which extends \cite[Proposition 5.3F]{AF}. 

\begin{theorem}\label{*main-flat.thm} Let
$$\Bbb F: \cdots \xrightarrow{}  F_{n+1}\xrightarrow{\partial_{n+1}} F_n\xrightarrow{\partial_n}  F_{n-1}\xrightarrow{}  \cdots $$
be a complex of modules in $\mathscr{S}^{\perp}$. Then the following assertions are equivalent for any integer $d$.
  \begin{enumerate}
   \item[\rm(1)] $\HH{i}{R/\fp\otimes_R \Bbb F} \in \mathscr{L}$  for all $i\geqslant d$ and each $\fp\in \Spec R$ such that $R/\fp\in \mathscr{S}$.
  \item[\rm(2)] $\HH{i}{k(\fp)\otimes_R \Bbb F}\in \mathscr{L}$  for all $i\geqslant d$ and each $\fp\in \Spec R$ such that $R/\fp\in \mathscr{S}$.
   \item[\rm(3)]  $\HH{i}{k(\fp)\otimes_{R_{\fp}} \Bbb F_{\fp}}\in \mathscr{L}$  for all $i\geqslant d$ and each $\fp\in \Spec R$ such that $R/\fp\in \mathscr{S}$.
      \item[\rm(4)] $\HH{i}{S\otimes_R \Bbb F} \in \mathscr{L}$  for all $i\geqslant d$ and each $S\in \mathscr{S}$. 
   \end{enumerate}
In particular,  if $\HH{i}{\Bbb F}_{\fp}\in \mathscr{L}$ for all $i\geqslant d$ and each $\fp\in \Spec R\backslash S(R)$, then the above assertions are equivalent when we replace 
$\Spec R$ with $S(R)$.
\end{theorem}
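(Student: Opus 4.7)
My plan is to prove $(4)\Rightarrow(1)\Rightarrow(2)\Leftrightarrow(3)$ together with the main implication $(2)\Rightarrow(4)$, which closes the equivalences. The implication $(4)\Rightarrow(1)$ is immediate: take $S=R/\fp$. The equivalence $(2)\Leftrightarrow(3)$ comes from the canonical isomorphism $k(\fp)\otimes_R\Bbb F\cong k(\fp)\otimes_{R_\fp}\Bbb F_\fp$, available because $k(\fp)$ already carries an $R_\fp$-structure and $R_\fp$ is $R$-flat. For $(1)\Rightarrow(2)$, I would express $k(\fp)=\varinjlim_{s\in R\setminus\fp}s^{-1}(R/\fp)$ as a filtered colimit of copies of $R/\fp$ inside its fraction field, with transition maps given by multiplication. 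Since homology commutes with filtered colimits and $\mathscr{L}$, being localizing, is closed under filtered colimits, one obtains $H_i(k(\fp)\otimes_R\Bbb F)=\varinjlim H_i(R/\fp\otimes_R\Bbb F)\in\mathscr{L}$ for $i\geqslant d$.

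The core argument is $(2)\Rightarrow(4)$, which I would carry out in three stages. In Stage A, reduce to $S$ finitely generated: write $S\in\mathscr{S}$ as the directed union of its finitely generated submodules (all in $\mathscr{S}$) and again invoke filtered colimit closure of $\mathscr{L}$. In Stage B, reduce to $S=R/\fp$ with $R/\fp\in\mathscr{S}$: a finitely generated $S\in\mathscr{S}$ admits a prime filtration $0=S_0\subset\cdots\subset S_n=S$ with $S_i/S_{i-1}\cong R/\fp_i$, each factor in $\mathscr{S}$ as a subquotient of $S$; each short exact sequence of the filtration remains exact upon tensoring with any $F_n$ since $F_n\in\mathscr{S}^{\perp}$ forces $\Tor_1^R(R/\fp_i,F_n)=0$, so induction on $n$ using the long exact homology sequences plus closure of $\mathscr{L}$ under extensions accomplishes the reduction. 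In Stage C, establish $H_i(R/\fp\otimes_R\Bbb F)\in\mathscr{L}$ for $i\geqslant d$ by Noetherian induction on $V(\fp)\subseteq\Spec R$. The base case $\fp$ maximal is immediate since $R/\fp=k(\fp)$. For the inductive step, use the short exact sequence
\[
0\to R/\fp\to k(\fp)\to k(\fp)/(R/\fp)\to 0,
\]
noting that the cokernel equals $\varinjlim_{s\notin\fp}R/(\fp,s)$; each cyclic $R/(\fp,s)$ is supported strictly inside $V(\fp)$ and has a prime filtration with factors $R/\fq$ for $\fq\supsetneq\fp$ (automatically in $\mathscr{S}$ as quotients of $R/\fp$), so the inductive hypothesis combined with Stage B yields $H_i(R/(\fp,s)\otimes_R\Bbb F)\in\mathscr{L}$, whence $H_i(k(\fp)/(R/\fp)\otimes_R\Bbb F)\in\mathscr{L}$ by filtered colimits. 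The long exact homology sequence of the displayed SES, combined with $(2)$ and closure of $\mathscr{L}$ under extensions, then delivers $H_i(R/\fp\otimes_R\Bbb F)\in\mathscr{L}$ in all degrees $i\geqslant d$ (the range aligns because $H_i(R/\fp\otimes_R\Bbb F)$ sits between $H_{i+1}(k(\fp)/(R/\fp)\otimes_R\Bbb F)$ and $H_i(k(\fp)\otimes_R\Bbb F)$, both in $\mathscr{L}$ when $i\geqslant d$).

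The main obstacle is justifying that the above SES induces an honest long exact sequence of homologies even though $k(\fp)$ and $k(\fp)/(R/\fp)$ need not lie in $\mathscr{S}$. For this, I would observe that the class $\{N\in\Mod R:\Tor^R_{\geqslant 1}(N,F_n)=0\text{ for all }n\}$ contains $\mathscr{S}$ and is closed under both extensions and filtered colimits (since $\Tor$ commutes with filtered colimits), so it contains $k(\fp)$ and $k(\fp)/(R/\fp)$; the termwise functor $F_n\otimes_R(-)$ therefore preserves the SES, producing a short exact sequence of complexes and hence the desired long exact sequence. For the final addendum about $S(R)$: any $\fp\notin S(R)$ has $R_\fp$ a field, so $k(\fp)=R_\fp$ and $H_i(k(\fp)\otimes_R\Bbb F)\cong H_i(\Bbb F)_\fp$, which lies in $\mathscr{L}$ by the supplementary hypothesis. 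Thus the $S(R)$-restricted form of (2) automatically upgrades to the full (2), after which the main theorem supplies the remaining equivalences restricted to $S(R)$.
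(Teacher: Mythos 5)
Your proposal is correct, and although it passes through the same landmarks as the paper's argument --- filtered colimits to reduce to finitely generated modules, Noetherian induction, and the auxiliary modules $R/(\fp+(s))$ for $s\notin\fp$ --- the organization of the key implication $(2)\Rightarrow(4)$ is genuinely different. The paper first reduces to cyclic modules $R/I$ via a general lemma on half-exact functors commuting with direct limits, then runs a maximal-counterexample argument over \emph{all} ideals $I$ with $R/I\in\mathscr{S}$: a maximal counterexample must be prime (by splitting off an associated prime), and a prime $\fp$ cannot be a counterexample because the kernel of the localization map $\HH{i}{R/\fp\otimes_R\Bbb F}\to \HH{i}{R/\fp\otimes_R\Bbb F}\otimes_{R/\fp}k(\fp)$ is the directed union of the $x$-torsion submodules, each a quotient of $\HH{i+1}{R/(\fp+(x))\otimes_R\Bbb F}\in\mathscr{L}$ via the connecting homomorphism, while the image embeds into $\HH{i}{k(\fp)\otimes_R\Bbb F}\in\mathscr{L}$. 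You instead reduce to primes first (via prime filtrations) and close the prime case by tensoring the single short exact sequence $0\to R/\fp\to k(\fp)\to k(\fp)/(R/\fp)\to 0$ with the complex, identifying the cokernel as $\varinjlim_s R/(\fp+(s))$. These are two presentations of the same underlying computation: your short exact sequence of complexes packages in one stroke the kernel/image analysis the paper performs element by element on homology. Your version makes it more transparent where the inductive hypothesis enters (through the cokernel term), at the price of having to justify that $-\otimes_R F_n$ preserves that sequence, which you do correctly by noting that the Tor-perpendicular class of the $F_n$'s is closed under extensions and filtered colimits. Two small points worth making explicit in a write-up: Stage C invokes the filtration argument of Stage B for the modules $R/(\fp+(s))$, but there is no circularity because the prime factors occurring there strictly contain $\fp$ and are covered by the Noetherian induction hypothesis; and for the $S(R)$ addendum one should note that the implications $(1')\Rightarrow(2')\Leftrightarrow(3')$ for the restricted statements hold prime by prime, so that upgrading $(2')$ to $(2)$ indeed closes the whole cycle.
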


\paragraph
The proof of the theorem above is given in Section 2,  and applications are presented in Sections 3 and 4. One such, discussed in \Cref{*app.thm}, is to study when $\Tor_{i\geqslant 0}^R(S,M)$ is in $\mathscr{L}$ for any $S\in \mathscr{S}$. In Section 4, for any ideal $I$ of $R$ we introduce the concepts of relatively-$I$-flat and relatively-$I$-injective modules(see  \Cref{*relative-flat-inj.def}),  which generalize flat and injective modules respectively. As an application of \Cref{*app.thm}, we give some nontrivial examples of relatively-$I$-flat and relatively-$I$-injective modules(see \Cref{*relative-flat-inj.exm}). Next we show how these two classes of modules give rise to two new homological dimensions which we call relatively-$I$-flat and relatively-$I$-injective dimension respectively.  Moreover, we get some new characterizations of regular and Gorenstein rings(see \Cref{*regular.thm}, \Cref{*Gorenstein.thm}).

\section{Main results}
\paragraph
The following lemma is needed in the proof of the main result of this paper.
\begin{lemma}\label{*cyclic.lem}
Let $F: \mathscr{S}\to \Mod R$ be a half-exact functor that commutes with direct limits. If $F(M) \in \mathscr{L}$ for any cyclic module $M$ in $\mathscr{S}$, then $F(S) \in \mathscr{L}$ for any module $S$ in $\mathscr{S}$.
 \end{lemma}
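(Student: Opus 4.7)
The plan is to reduce to the cyclic case in two stages: first pass from arbitrary modules in $\mathscr{S}$ to finitely generated ones using that $F$ commutes with direct limits, and then induct on the number of generators using half-exactness.

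For the first reduction, write any $S\in \mathscr{S}$ as the direct limit $S=\varinjlim_\alpha S_\alpha$ of its finitely generated submodules. Since $\mathscr{S}$ is closed under submodules, each $S_\alpha$ lies in $\mathscr{S}$. Because $F$ commutes with direct limits, $F(S)=\varinjlim_\alpha F(S_\alpha)$. The subcategory $\mathscr{L}$ is localizing, hence closed under arbitrary direct sums; combined with closure under cokernels (being Serre), this forces closure under arbitrary direct limits. So it suffices to show $F(S_\alpha)\in \mathscr{L}$ for each finitely generated $S_\alpha\in \mathscr{S}$.

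For the second step, I would proceed by induction on the minimal number $n$ of generators of a finitely generated $S\in \mathscr{S}$. The base case $n\leqslant 1$ is exactly the hypothesis. For $n\geqslant 2$, pick a generator $x\in S$ and form the short exact sequence
\[
0\to S'\to S\to S/S'\to 0,
\]
where $S':=Rx$ is cyclic and $S/S'$ is generated by $n-1$ elements. Both $S'$ and $S/S'$ belong to $\mathscr{S}$ (closure under submodules and factor modules), so $F(S')\in \mathscr{L}$ by hypothesis and $F(S/S')\in \mathscr{L}$ by induction. Half-exactness of $F$ gives the exact sequence
\[
F(S')\to F(S)\to F(S/S').
\]
The image of $F(S')\to F(S)$ is a quotient of $F(S')$, hence lies in $\mathscr{L}$; the quotient $F(S)/\mathrm{image}$ embeds into $F(S/S')$, hence lies in $\mathscr{L}$; thus $F(S)$ is an extension of modules in $\mathscr{L}$, and so $F(S)\in \mathscr{L}$ because $\mathscr{L}$ is Serre.

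I do not expect a real obstacle here: the argument is a standard dévissage. The only point that merits care is verifying that a localizing subcategory of $\Mod R$ is automatically closed under \emph{filtered} direct limits (not merely direct sums), which is what allows the first reduction to go through; this follows from the presentation of a direct limit as the cokernel of an explicit map between two direct sums together with the Serre property.
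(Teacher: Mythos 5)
Your proof is correct and follows essentially the same two-step dévissage as the paper: reduce to finitely generated modules via direct limits of submodules, then induct on the number of generators using half-exactness and the Serre property of $\mathscr{L}$. The only (immaterial) difference is that you take the cyclic piece as the submodule $Rx$ rather than as the quotient, and you spell out the closure of $\mathscr{L}$ under filtered colimits, which the paper takes for granted.
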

\begin{proof} 
First we prove that $F(S)$ is in $\mathscr{L}$ for any $S\in \mod R\cap \mathscr{S}$. By induction on $r$, the number of  generators of $S$.  The case for $r=1$ is trivial. When $r>1$, there is an exact sequence $0\xrightarrow{}  S'\xrightarrow{} S\xrightarrow{} S''\xrightarrow{}  0$ in  $\mathscr{S}$ such that $S'$ is generated by $r-1$ elements and $S''$ is a cyclic module. 
Because $F$ is half-exact,  we get an exact sequence $F(S')\xrightarrow{} F(S)\xrightarrow{} F(S'')$. Thus by induction $F(S')$ and $F(S'')$ are in $\mathscr{L}$. So $F(S)$  is in $\mathscr{L}$ since $\mathscr{L}$ is a Serre subcategory of $\Mod R$. For any $S\in \mathscr{S}$,  since $\mathscr{S}$ is closed under submodules, one has that $S=\varinjlim_{j\in J}S_j$ with $S_j\in \mod R\cap \mathscr{S}$.   Note that $F(S)\cong F(\varinjlim_{j\in J}S_j)\cong \varinjlim_{i\in J}F(S_j)$ and the localizing subcategory $\mathscr{L}$ is also closed under direct limits. We conclude that $F(S) $ is in $\mathscr{L}$.
\end{proof} 

\paragraph  Now we are ready to prove \Cref{*main-flat.thm} as follows. 
\begin{proof}[Proof of Theorem \ref{*main-flat.thm}]
$(2) \Rightarrow (4) $ By \Cref{*cyclic.lem}, it suffices to consider the case that $S$ is a cyclic module in $\mathscr{S}$. 

Consider the set of ideals $$\mathcal{A}:=\{I\subset R\mid R/I\in  \mathscr{S}  \,\,\text{and}  \,\,\HH{i}{R/I\otimes_R \Bbb F} \notin \mathscr{L} \,\, \text{for some $i\geqslant d$}\}.$$
If $\mathcal{A}=\emptyset$, then we are done.  Otherwise, $\mathcal{A}\neq \emptyset$ and thus there exists a maximal element $I$ in $\mathcal{A}$ since $R$ is noetherian. 

We claim that $I$ is a prime ideal of $R$. If it is not a prime ideal, then it follows from \cite[Theorem 6.1]{Ma} that there exists an associated prime $\fp$ of $R/I$. Thus we have an exact sequence $$0 \xrightarrow{} R/\fp \xrightarrow{} R/I \xrightarrow{} R/(I+(x))\xrightarrow{}  0$$ such that $\fp=(I: x)$ for some $x\in R\backslash I$. As $R/I \in \mathscr{S}$, we get that $R/\fp, R/(I+(x))\in  \mathscr{S}$. Since each module $F_i$ in the complex $\Bbb F$ is in $\mathscr{S}^{\perp}$ by assumption, we obtain an exact sequence of complexes 
$$0 \xrightarrow{} R/\fp\otimes_R \Bbb F \xrightarrow{} R/I\otimes_R \Bbb F  \xrightarrow{} R/(I+(x))\otimes_R \Bbb F \xrightarrow{}  0.$$  
Because both the ideals $\fp$ and $I+(x)$ properly contain $I$, one has $\HH{i}{R/\fp\otimes_R \Bbb F} \in \mathscr{L}$ and $\HH{i}{R/(I+(x))\otimes_R \Bbb F}\in \mathscr{L}$ for all $i\geqslant d$. It implies that $\HH{i}{R/I\otimes_R \Bbb F} \in \mathscr{L}$ for all $i\geqslant d$. This is a contradiction.

Next we fix some $i\geqslant d$ such that $\HH{i}{R/I\otimes_R \Bbb F} \notin \mathscr{L}$.  Let $I:=\fp$ and $T:=R/\fp$ for some $\fp \in \Spec R$. Let $Q$ be the field of fractions of the domain $T$. For each $x\in R\backslash \fp$, there is an exact sequence 
$$0\xrightarrow{} T\xrightarrow{x} T\xrightarrow{} T/xT\xrightarrow{} 0.$$
 As $\fp+(x)$ properly contains $\fp$ and $R/\fp \in  \mathscr{S}$,  we have $R/\(\fp+(x)) \in  \mathscr{S}$. Then $\Tor^R_{\geqslant 1}(R/(\fp+(x),F_i)=0$ for any $F_i$ in $ \Bbb F$. Also note that $T/xT\cong R/(\fp+(x))$. We have an exact sequence of complexes 
$$0 \xrightarrow{} T\otimes_R \Bbb F \xrightarrow{x} T\otimes_R \Bbb F  \xrightarrow{} R/(\fp+(x))\otimes_R \Bbb F \xrightarrow{}  0,$$ and a long exact sequence   
$$\HH{i+1}{R/(\fp+(x))\otimes_R \Bbb F}\xrightarrow{} \HH{i}{T\otimes_R \Bbb F}  \xrightarrow{x} \HH{i}{T\otimes_R \Bbb F}.$$
Since the ideal $\fp+(x)$ properly contains $\fp$ and $\fp$ is a maximal element in $\mathcal{A}$,  we get that $\HH{i+1}{R/(\fp+(x))\otimes_R \Bbb F}\in \mathscr{L}$. 
Set $H_x:=\{m\in \HH{i}{T\otimes_R \Bbb F} \mid xm=0\}$. It is easy to see that $H_x\in \mathscr{L}$ since the subcategory $\mathscr{L}$ is closed under factor modules. Consider the natural map $$ \HH{i}{T\otimes_R \Bbb F}  \xrightarrow{f}  \HH{i}{T\otimes_R \Bbb F}\otimes_TQ.$$  Then $\Ker f=\sum_x H_x\in \mathscr{L}$. Because $ \HH{i}{T\otimes_R \Bbb F}\otimes_TQ\cong  \HH{i}{Q\otimes_TT\otimes_R \Bbb F}\cong  \HH{i}{Q\otimes_R \Bbb F}\cong  \HH{i}{k(\fp)\otimes_R \Bbb F}\in \mathscr{L}$ by assumption,  $\Im f\in \mathscr{L}$ and thus $ \HH{i}{T\otimes_R \Bbb F} \cong  \HH{i}{R/I\otimes_R \Bbb F} \in \mathscr{L}$.  This is a contradiction.

$(1) \Rightarrow (2) $  Suppose $\fp\in \Spec R$ such that $R/\fp\in \mathscr{S}$. Since $k(\fp)=\varinjlim_{}R/\fp$ by \cite[Lemma 2.4]{Kra} and  $\mathscr{L}$ is closed under direct limits,  $ \HH{i}{k(\fp)\otimes_R \Bbb F}\cong \varinjlim_{}( \HH{i}{R/\fp\otimes_R \Bbb F})$ is in $\mathscr{L}$ for all $i\geqslant d$.

$(2) \Leftrightarrow (3) $ is clear from the isomorphism $k(\fp)\otimes_R \Bbb F\cong k(\fp)\otimes_{R_{\fp}} \Bbb F_{\fp}$.

$(4) \Rightarrow (1) $ is obvious.

If $\fp \in \Spec R\backslash S(R)$, then $R_{\fp}$ is a field and thus $\fp R_{\fp}=0$. For all $i\geqslant d$, we have $$\HH{i}{k(\fp)\otimes_R\Bbb F}\cong \HH{i}{R_{\fp}\otimes_R\Bbb F}\cong \HH{i}{\Bbb F}_{\fp}\in \mathscr{L}.$$ Therefore the last assertion holds.
\end{proof}

\paragraph We point out that the corollary below is a generalization of \Cref{*Hashimoto.thm}.
\begin{corollary}\label{*flat-proj.cor} Let $d$ be an integer and
$$\Bbb F: \cdots \xrightarrow{}  F_{n+1}\xrightarrow{\partial_{n+1}} F_n\xrightarrow{\partial_n}  F_{n-1}\xrightarrow{}  \cdots $$
a complex of modules in $\mathscr{S}^{\perp}$.  If $ \HH{i}{k(\fp)\otimes_R \Bbb F}=0$ or equivalently $ \HHH{i}{\Hom_R(\Bbb F, k(\fp))}=0$ for all $i\geqslant d$ and each $\fp\in \Spec R$ such that $R/\fp\in \mathscr{S}$,  then $ \HH{i}{S\otimes_R \Bbb F}=0$ for all $i\geqslant d$ and each $S\in \mathscr{S}$. Suppose in addition that $R\in \mathscr{S}$,  then $ \HH{i}{\Bbb F}=0$ for all $i\geqslant d$.
 \end{corollary}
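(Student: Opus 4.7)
The plan is to derive this corollary as the special case $\mathscr{L}=\{0\}$ of \Cref{*main-flat.thm}. The zero subcategory is closed under submodules, quotients, extensions, and arbitrary direct sums, hence is localizing. With this choice the relation ``$\HH{i}{X}\in\mathscr{L}$'' simply becomes ``$\HH{i}{X}=0$'', so hypothesis (2) of the theorem reads exactly $\HH{i}{k(\fp)\otimes_R\Bbb F}=0$ for $i\geqslant d$ and every $\fp$ with $R/\fp\in\mathscr{S}$, while conclusion (4) reads $\HH{i}{S\otimes_R\Bbb F}=0$ for $i\geqslant d$ and $S\in\mathscr{S}$. Thus the first assertion is immediate from the equivalence (2)$\Leftrightarrow$(4) in \Cref{*main-flat.thm}.

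Next I would justify the parenthetical ``or equivalently''. Since $k(\fp)$ is a field, $k(\fp)\otimes_R\Bbb F$ is a complex of $k(\fp)$-vector spaces, and the natural adjunction isomorphism
\[
\Hom_R(\Bbb F,k(\fp))\cong \Hom_{k(\fp)}(k(\fp)\otimes_R\Bbb F,k(\fp))
\]
identifies $\Hom_R(\Bbb F,k(\fp))$ with the $k(\fp)$-linear dual. Since vector space duality is exact and faithful, the cohomology of $\Hom_R(\Bbb F,k(\fp))$ in degree $i$ is the dual of $\HH{i}{k(\fp)\otimes_R\Bbb F}$, and the two vanish simultaneously. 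Hence either formulation may be used as the input to \Cref{*main-flat.thm}.

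For the final assertion, if $R\in\mathscr{S}$ then we may set $S=R$ in the vanishing already established and use $R\otimes_R\Bbb F\cong\Bbb F$ to conclude $\HH{i}{\Bbb F}=0$ for all $i\geqslant d$. There is no genuine obstacle here: the corollary is essentially a packaging of \Cref{*main-flat.thm} under a trivial localizing subcategory, with the only extra bookkeeping being the field-duality identification and the specialization $S=R$.
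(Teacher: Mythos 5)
Your proposal is correct and follows essentially the same route as the paper: specialize \Cref{*main-flat.thm} to the localizing subcategory $\mathscr{L}=0$, identify $\Hom_R(\Bbb F,k(\fp))$ with the $k(\fp)$-linear dual of $k(\fp)\otimes_R\Bbb F$ via adjunction and exactness/faithfulness of vector-space duality, and obtain the last assertion by taking $S=R$. Your version is, if anything, slightly more explicit than the paper's in checking that $\{0\}$ is localizing and that duality is exact (not merely faithful), which is needed to commute it past homology.
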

\begin{proof}
Note that $\Hom_R(\Bbb F, k(\fp))\cong \Hom_{ k(\fp)}( k(\fp)\otimes_R \Bbb F, k(\fp))$ and $\Hom_{k(\fp)}(-, k(\fp))$ is a faithful functor.
For any integer $i$, $ \HH{i}{k(\fp)\otimes_R \Bbb F}=0$ if and only if $ \HHH{i}{\Hom_R(\Bbb F, k(\fp))}=0$. Thus \Cref{*main-flat.thm} implies that  $\HH{i}{S\otimes_R \Bbb F}=0$ for all $i\geqslant d$ and each $S\in \mathscr{S}$. The last assertion is obvious when $\mathscr{S}$ contains $R$.
\end{proof}

\begin{remark}\label{*flat-proj.rmk}  Under the assumptions of \Cref{*flat-proj.cor},  we can not get $\HHH{i}{\Hom_R(\Bbb F, S)}=0$ for all $i\geqslant d$ and each $S\in \mathscr{S}$ in general. For example, let $R=\Bbb Z$ and $\mathscr{S}=\mod \Bbb Z$, consider a complex of projective modules
    $$\Bbb P: \cdots \xrightarrow{\partial_2}  P_{1}\xrightarrow{\partial_1} P_0\xrightarrow{} 0  $$ such that $ \HH{i}{\Bbb P}=0$ for all $i\geqslant 1$ and $ \HH{0}{\Bbb P}=\Bbb Q$. Since $\Bbb Q$ is a flat $\Bbb Z$-module, all the assumptions of \Cref{*flat-proj.cor} are satisfied, but $\HHH{1}{\Hom_{\Bbb Z}(\Bbb P, \Bbb Z)}\cong \Ext^1_{\Bbb Z}(\Bbb Q, \Bbb Z)\neq 0$.
\end{remark}

\begin{example}\label{}
 Let
  \[
  \Bbb F: 0\rightarrow F^0\xrightarrow{d^0} F^1 \xrightarrow{d^1}F^2\rightarrow\cdots
  \]
  be a complex of flat $R$-modules.
  Even if $\HHH{i}{k(\fp)\otimes_R\Bbb F}=0$ for $i>0$ and $\fp\in\Spec R$, we do not have that $\Bbb F$ is acyclic in general.
  Let $(R,\fm)$ be a discrete valuation ring, and $E=E(R/\fm)$.
  If $\Bbb F=0\rightarrow F^0\xrightarrow{d^0} F^1\rightarrow 0$ is a deleted projective resolution of $E$ (shifted by degree one),
  then $\HHH{i}{\Bbb F\otimes_R k(\fp)}=0$ for any $i>0$ and $\fp\in\Spec R$, but $H^1(\Bbb F)=E\neq 0$.
  However, {\rm\cite[Proposition~III.2.1.14]{H1}} is an affirmative result on this direction.
\end{example}

\section{Containments of $\Tor^R_i(-,M)$ and $\Ext^i_R(-,M)$ in  $\mathscr{L}$}
\paragraph
For an ideal $I$ of $R$, by using the notion of regular sequences,  it is well-known that for 
a finitely generated $R$-module $M$,  $\Ext^i_R(R/I, M)=0$ for $0 \leqslant i \leqslant n$ if and only if $\Ext^i_R(N, M)=0$ for any  $N\in \mod R$ with  $\Supp N\subset V(I)$ and for any $0 \leqslant i \leqslant n$(see \cite[Theorem 16.6]{Ma}).  In parallel, by introducing the dual notion of coregular sequences, Ooishi proved that for an artinian $R$-module $M$,   $\Tor_i^R(R/I, M)=0$ for $0 \leqslant i \leqslant n$ if and only if $\Tor_i^R(N, M)=0$ for any $N\in \mod R$ with  $\Supp N\subset V(I)$ and for any $0 \leqslant i \leqslant n$(see \cite[Theorem 3.9]{O}). In this section, our goal is to extend and unify these classical results to Serre subcategories. Spectral sequences are a helpful tool in proving our main result of this section.

\begin{lemma}\label{*rad.lem}  Let $M$ be an $R$-module. Suppose that $I$ is an ideal of $R$ and $N\in \mod R$ such that $\Supp N\subset V(I)$.  Then the following assertions hold for any non-negative integer $n$.
\begin{enumerate}
  \item[\rm(1)] If  $\Ext^i_R(R/I, M)\in \mathscr{S}$ for $0 \leqslant i \leqslant n$, then $\Ext^i_R(N, M)\in \mathscr{S}$ for $0 \leqslant i \leqslant n$.
   \item[\rm(2)] If  $\Tor_i^R(R/I, M)\in \mathscr{S}$ for $0 \leqslant i \leqslant n$, then $\Tor_i^R(N, M)\in \mathscr{S}$ for $0 \leqslant i \leqslant n$.
    \end{enumerate}
\end{lemma}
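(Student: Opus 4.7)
The plan is to prove (1) and (2) in parallel by a two-step strategy: first reduce to the case where $N$ is a finitely generated $R/I$-module, and then handle that case by presenting $N$ over $R/I$ and inducting on the homological degree $n$.

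For the reduction, since $N$ is finitely generated with $\Supp N \subseteq V(I)$, some power $I^k$ annihilates $N$. Consider the filtration
\[
N = I^0 N \supseteq IN \supseteq \cdots \supseteq I^k N = 0,
\]
whose successive quotients $I^j N / I^{j+1} N$ are finitely generated $R/I$-modules. Each short exact sequence $0 \to I^{j+1}N \to I^jN \to I^jN/I^{j+1}N \to 0$ induces long exact sequences for both $\Ext^{\bullet}_R(-,M)$ and $\Tor_{\bullet}^R(-,M)$; assuming the result for the $R/I$-module quotients, an induction on $k$, combined with the closure of $\mathscr S$ under submodules, quotients, and extensions, transports the conclusion up the filtration to $N$ itself.

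It then remains to handle a finitely generated $R/I$-module $N$. Choose a presentation $(R/I)^{n_1} \to (R/I)^{n_0} \to N \to 0$ over $R/I$ and let $K$ denote the kernel of the surjection $(R/I)^{n_0} \twoheadrightarrow N$; by noetherianity, $K$ is again a finitely generated $R/I$-module. I induct on $n$. For $n=0$, $\Hom_R(N,M)$ embeds into $\Hom_R(R/I,M)^{n_0} \in \mathscr S$, while $N \otimes_R M$ is a quotient of $((R/I)\otimes_R M)^{n_0} \in \mathscr S$. For $n \geqslant 1$, the long exact sequence of $\Ext$ (resp.\ $\Tor$) attached to $0 \to K \to (R/I)^{n_0} \to N \to 0$ presents $\Ext^n_R(N,M)$ (resp.\ $\Tor_n^R(N,M)$) as an extension of subquotients of $\Ext^n_R(R/I,M)^{n_0}$ and $\Ext^{n-1}_R(K,M)$ (resp.\ $\Tor_n^R(R/I,M)^{n_0}$ and $\Tor_{n-1}^R(K,M)$). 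The first factor lies in $\mathscr S$ by hypothesis, the second by the inductive hypothesis applied to $K$, so the Serre property closes the step.

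The argument is essentially bookkeeping and I do not foresee any deep obstacle; the only delicate point is to organize the two nested inductions so that the Serre closure properties can be cited cleanly at each stage. A potentially slicker alternative, in keeping with the section's remark that spectral sequences will be useful, is to invoke the change-of-rings spectral sequence
\[
E_2^{p,q} = \Ext^p_{R/I^k}(N, \Ext^q_R(R/I^k, M)) \Longrightarrow \Ext^{p+q}_R(N, M)
\]
and its $\Tor$ analogue, reducing the lemma to the control of $\Ext^q_R(R/I^k, M)$ and $\Tor_q^R(R/I^k, M)$ for $q \leqslant n$, which one obtains from the same filtration of $R/I^k$ by powers of $I$.
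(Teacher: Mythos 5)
Your proof is correct, but it takes a genuinely different route from the paper's. You reduce to finitely generated $R/I$-modules via the $I$-adic filtration $N\supseteq IN\supseteq\cdots\supseteq I^kN=0$, and then handle that case by dimension shifting: present $N$ over $R/I$ and induct on the homological degree, feeding the syzygy $K$ back into the inductive hypothesis. The paper instead takes a prime filtration $0=N_0\subset\cdots\subset N_k=N$ with $N_j/N_{j-1}\cong R/\fp_j$ and $\fp_j\in\Supp N\subseteq V(I)$ (so each $R/\fp_j$ is an $R/I$-module), and disposes of each cyclic quotient in one stroke using the change-of-rings spectral sequence $E_2^{p,q}=\Ext^p_{R/I}(R/\fp_j,\Ext^q_R(R/I,M))\Rightarrow\Ext^{p+q}_R(R/\fp_j,M)$, whose $E_2$-page consists of subquotients of finite direct sums of the modules $\Ext^q_R(R/I,M)$. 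Your argument is more elementary: it needs nothing beyond long exact sequences and the Serre closure properties, and in particular avoids the auxiliary fact that $\Hom_R(R/I,E)$ is injective over $R/I$, which the paper needs to set up the spectral sequence; the price is a second nested induction on the homological degree. The spectral-sequence alternative you sketch at the end is essentially the paper's argument (with $R/I$ rather than $R/I^k$ as the intermediate ring, which is why the paper filters by primes instead of by powers of $I$). The one bookkeeping point you should make explicit is that your induction on $n$ must quantify over all finitely generated $R/I$-modules simultaneously, so that the inductive hypothesis in degree $n-1$ can legitimately be applied to the syzygy $K$ rather than to $N$ itself.
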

\begin{proof} Since $N$ is finitely generated, it follows from \cite[Theorem 6.4]{Ma} that there exists a chain $0=N_0\subset N_1\subset \cdots \subset N_k=N$ of submodules of $N$ such that for each $i$ we have $N_j/N_{j-1}\cong R/\fp_j$ with $\fp_j \in \Supp N$.

 (1) Note that $\Hom_R(R/I, E)$ is an injective $R/I$-module for any injective $R$-module $E$ by \cite[Lemma 3.5]{Lam}.
  For each $\fp_j$, by \cite[Theorem 10.64]{Rot}, there is a spectral sequence $$E_2^{p,q} = \Ext^p_{R/I}(R/\fp_j, \Ext^q_R(R/I, M)) \Rightarrow \Ext^{n}_R(R/\fp_j, M).$$
  Being a subquotient of a finite direct sum of copies of $\Ext^q_R(R/I,M)\in\mathscr{S}$, we have that $E_2^{p,q}\in \mathscr{S}$ for $0 \leqslant q \leqslant n$ and any $p$. 
  Since $E_{\infty}^{p,q}$ is isomorphic to a subquotient of $E_2^{p,q}$, we have $E_{\infty}^{p,q}\in \mathscr{S}$ for $0 \leqslant q \leqslant n$ and any $p$.
  Hence $\Ext^i_R(R/\fp_j, M)\in \mathscr{S}$ for $0 \leqslant i \leqslant n$.
  Consequently, applying the functor $\Ext_R^i(-, M)$ to exact sequences $0\to N_{j-1}\to N_j\to N_i/N_{j-1}\to 0$ yields that $\Ext^i_R(N, M)\in \mathscr{S}$ for $0 \leqslant i \leqslant n$.

 (2) can be proved similarly by using \cite[Theorem 10.60]{Rot}.
\end{proof}

\paragraph
Let $I=(x_1,x_2,\cdots,x_n)$ be an ideal of $R$ and  $M$ a complex of $R$-modules. In the following, we use $K(I)$ to denote the Koszul complex with respect to $I=(x_1,x_2,\cdots,x_n)$. The Koszul complex $K(I)$ is a bounded complex of finite free modules.  Then we define $H_i(K(I),M):=\Tor_i^R(K(I),M)=H_i(K(I)\otimes_RM)$ and $H^i(K(I),M):=\Ext^i_R(K(I),M)=H^i(\Hom_R(K(I),M))$. 

\paragraph
The preceding lemma allows us to extend some parts of  \cite[Theorem 2.8]{AT} to modules which are not necessarily finitely generated.
\begin{proposition}\label{Koszul-coh.prop}
 Let  $M$ be an $R$-module. If $I$ and $J$ are ideals of $R$ such that $\sqrt{I}=\sqrt{J}$, then the following assertions are equivalent for any non-negative integer $n$.
  \begin{enumerate}
  \item[\rm(1)] $\Ext^i_R(R/I, M)\in \mathscr{S}$ for $0 \leqslant i \leqslant n$.
      \item[\rm(2)] $\Ext^i_R(R/J, M)\in \mathscr{S}$ for $0 \leqslant i \leqslant n$.
   \item[\rm(3)] $H^i(K(I), M)\in \mathscr{S}$ for $0 \leqslant i \leqslant n$.
     \item[\rm(4)] $H^i(K(J), M)\in \mathscr{S}$ for $0 \leqslant i \leqslant n$.
     \end{enumerate}
\end{proposition}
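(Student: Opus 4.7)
The plan is to split the four-way equivalence in half. Because $\sqrt{I} = \sqrt{J}$ gives $V(I) = V(J)$, both $R/I$ and $R/J$ are finitely generated modules with support in $V(I) = V(J)$, so \Cref{*rad.lem}(1) applied to each in turn establishes $(1)\Leftrightarrow(2)$. By symmetry it then suffices to prove $(1)\Leftrightarrow(3)$, since the identical argument with $J$ in place of $I$ yields $(2)\Leftrightarrow(4)$.

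For $(1)\Leftrightarrow(3)$, I would use the hyperhomology spectral sequence arising from the first-quadrant double complex $\Hom_R(K(I),E^\bullet)$, where $E^\bullet$ is an injective resolution of $M$. Filtering one way and taking $E^\bullet$-cohomology first, the freeness of each $K_p$ collapses the sequence onto row zero with value $H^p(K(I),M)$. Filtering the other way and taking $K(I)$-cohomology first, exactness of $\Hom_R(-,E^q)$ identifies $H^p(\Hom_R(K(I),E^q))$ with $\Hom_R(H_p(K(I)),E^q)$, so on the next page
\[
E_2^{p,q}=\Ext^q_R(H_p(K(I)),M)\;\Longrightarrow\;H^{p+q}(K(I),M).
\]
The crucial feature is that every Koszul homology module $H_p(K(I))$ is a finitely generated $R$-module annihilated by $I$, hence has support in $V(I)$; thus \Cref{*rad.lem}(1) applies to it whenever $\Ext^j_R(R/I,M)\in\mathscr{S}$ for the relevant range of $j$.

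The direction $(1)\Rightarrow(3)$ is then immediate: \Cref{*rad.lem}(1) places every $E_2^{p,q}$ with $0\leqslant q\leqslant n$ into $\mathscr{S}$, and each $H^i(K(I),M)$ with $0\leqslant i\leqslant n$ is assembled from subquotients $E_\infty^{p,q}$ (with $p+q=i$) of such terms. For the converse $(3)\Rightarrow(1)$, I would argue by induction on $i$. The base case $i=0$ is immediate since $H^0(K(I),M)=\Hom_R(R/I,M)=\Ext^0_R(R/I,M)$. In the inductive step, the induction hypothesis together with \Cref{*rad.lem}(1) gives $E_2^{p,q}\in\mathscr{S}$ for $0\leqslant q\leqslant i-1$ and all $p$. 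At the corner position $(0,i)$, all outgoing differentials vanish for first-quadrant reasons, while each incoming differential $d_r\colon E_r^{r-1,i-r}\to E_r^{0,i}$ with $2\leqslant r\leqslant i$ originates from a subquotient of $E_2^{r-1,i-r}\in\mathscr{S}$. Hence the kernel of the surjection $E_2^{0,i}=\Ext^i_R(R/I,M)\twoheadrightarrow E_\infty^{0,i}$ is a finite extension of $\mathscr{S}$-modules (finiteness from boundedness of $K(I)$), while $E_\infty^{0,i}$ is itself a subquotient of $H^i(K(I),M)\in\mathscr{S}$; the Serre property then yields $\Ext^i_R(R/I,M)\in\mathscr{S}$. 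The main obstacle I expect is the bookkeeping in this induction — keeping track of which pages the differentials land on and ensuring each source remains finitely generated over $R/I$ so \Cref{*rad.lem}(1) can be invoked at every stage.
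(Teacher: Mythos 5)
Your proof is correct and takes essentially the same route as the paper: $(1)\Leftrightarrow(2)$ via \Cref{*rad.lem}(1), and $(1)\Leftrightarrow(3)$ (hence $(2)\Leftrightarrow(4)$ by symmetry) via the spectral sequence $E_2^{p,q}=\Ext^q_R(H_p(K(I)),M)\Rightarrow H^{p+q}(K(I),M)$, with the same corner induction for the converse. If anything, your bookkeeping of the differential directions at the position $(0,i)$ (incoming from $E_r^{r-1,i-r}$, with $E_\infty^{0,i}$ a subobject of $H^i(K(I),M)$) is the more careful version of the paper's argument, which is in any case insensitive to this since $\mathscr{S}$ is closed under subquotients and extensions.
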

\begin{proof} $(1)\Leftrightarrow (2)$ follows from \Cref{*rad.lem}(1).

$(1)\Rightarrow (3)$ There is a spectral sequence $$E_2^{p,q} = \Ext^q_R( \HH{p}{K(I)},M) \Rightarrow \HHH{p+q}{K(I), M}.$$ 
Since $I$ annihilates the homology group $ \HH{p}{K(I)}$ for all $p$,  $E_2^{p,q} \in \mathscr{S}$ for $0 \leqslant q \leqslant n$ and any $p$ by  \Cref{*rad.lem}(1).  Note that $E_{\infty}^{p,q}$ is isomorphic to a subquotient of $E_2^{p,q}$. We have that $E_{\infty}^{p,q} \in \mathscr{S}$ for $0 \leqslant q \leqslant n$ and any $p$. Since $E_{\infty}^{p,q}\cong \phi^pH^n/ \phi^{p-1}H^n$ and the filtration $\{\phi^pH^n\}$ of $H^n$ is bounded,  one has that $H^i(K(I), M)\in \mathscr{S}$ for $0 \leqslant i \leqslant n$.  

$(3)\Rightarrow (1)$ The spectral sequence $$E_2^{p,q} = \Ext^q_R(\HH{p}{K(I)},M) \Rightarrow \HHH{p+q}{K(I), M}$$ shows
$E_2^{0,0} = \Ext^0_R(R/I,M)\in \mathscr{S}$.
Assume that $\Ext^i_R(R/I,M)\in \mathscr{S}$ for $0 \leqslant i \leqslant t$,
we will show that $\Ext^{t+1}_R(R/I,M)\in \mathscr{S}$. Consider the sequence
$$0=E_2^{{-2},t+2}\xrightarrow{} E_2^{0,{t+1}} \xrightarrow{}  E_2^{2,t}.$$
Since $E_2^{2,t}\in \mathscr{S}$ by induction assumption,  we see that  $E_2^{0,t+1}=\Ext^{t+1}_R(R/I,M)\in \mathscr{S}$, provided $E_3^{0,t+1}\in\mathscr{S}$.
Since $E_4^{0,t+1}\rightarrow E_3^{0,t+1}\rightarrow E_3^{3,t-1}$ is exact and $E_3^{3,t-1}\in\mathscr{S}$, $E_3^{0,t+1}\in\mathscr{S}$, provided $E_4^{0,t+1}\in\mathscr{S}$.
Continuing this way, we see that it suffices to show that $E_\infty^{0,t+1}\in \mathscr{S}$.
This follows from the fact that there is a surjection $\HHH{t+1}{K(I),M}\to E_\infty^{0,t+1}$.
The proof of $(2)\Leftrightarrow (4)$ is similar to that of $(1)\Leftrightarrow (3)$.
\end{proof}

\paragraph
The dual version of \Cref{Koszul-coh.prop} is presented as follows, but for completeness we will give its proof.
\begin{proposition}\label{Koszul-ho.prop}
 Let  $M$ be an $R$-module. If $I$ and $J$ are ideals of $R$ such that $\sqrt{I}=\sqrt{J}$, then the following assertions are equivalent for any non-negative integer $n$.
  \begin{enumerate}
  \item[\rm(1)] $\Tor_i^R(R/I, M)\in \mathscr{S}$ for $0 \leqslant i \leqslant n$.
      \item[\rm(2)] $\Tor_i^R(R/J, M)\in \mathscr{S}$ for $0 \leqslant i \leqslant n$.
   \item[\rm(3)] $H_i(K(I), M)\in \mathscr{S}$ for $0 \leqslant i \leqslant n$.
     \item[\rm(4)] $H_i(K(J), M)\in \mathscr{S}$ for $0 \leqslant i \leqslant n$.
     \end{enumerate}
\end{proposition}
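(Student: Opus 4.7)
The plan is to adapt the proof of \Cref{Koszul-coh.prop} by dualizing throughout: replace $\Ext$ with $\Tor$, $\Hom$ with $\otimes$, and the hyperExt spectral sequence by its hyperTor analogue. The equivalences $(1)\Leftrightarrow(2)$ and $(3)\Leftrightarrow(4)$ follow directly from \Cref{*rad.lem}(2) via $\sqrt{I}=\sqrt{J}$ (so $V(I)=V(J)$), so it suffices to establish $(1)\Leftrightarrow(3)$.

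The key ingredient is the first-quadrant homological spectral sequence
$$E^2_{p,q} = \Tor_q^R(\HH{p}{K(I)}, M) \Rightarrow \HH{p+q}{K(I),M},$$
which can be obtained from the double complex $K(I)\otimes_R P_\bullet$ with $P_\bullet$ a projective resolution of $M$ (cf.\ \cite[Theorem 10.60]{Rot}). Each Koszul homology module $\HH{p}{K(I)}$ is finitely generated and annihilated by $I$, hence has support in $V(I)$, so \Cref{*rad.lem}(2) transfers hypotheses on $\Tor_*^R(R/I,M)$ to membership statements about the columns of this spectral sequence.

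For $(1)\Rightarrow(3)$, assume $\Tor_i^R(R/I,M)\in\mathscr{S}$ for $0\leqslant i\leqslant n$; \Cref{*rad.lem}(2) yields $E^2_{p,q}\in\mathscr{S}$ for all $p$ and $0\leqslant q\leqslant n$, hence $E^\infty_{p,q}\in\mathscr{S}$ in the same range. For $0\leqslant m\leqslant n$, the bounded filtration of $\HH{m}{K(I),M}$ has graded pieces $E^\infty_{p,m-p}$ with $q$-index $m-p\in[0,n]$, so $\HH{m}{K(I),M}\in\mathscr{S}$ by closure of $\mathscr{S}$ under extensions.

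For $(3)\Rightarrow(1)$, proceed by induction on $t$. The case $t=0$ is immediate from $\HH{0}{K(I),M}\cong R/I\otimes_R M\cong\Tor_0^R(R/I,M)$. Assuming $\Tor_i^R(R/I,M)\in\mathscr{S}$ for $0\leqslant i\leqslant t$, \Cref{*rad.lem}(2) gives $E^2_{p,q}\in\mathscr{S}$ for all $p$ and $0\leqslant q\leqslant t$. In the homological convention the differentials satisfy $d^r\colon E^r_{p,q}\to E^r_{p-r,q+r-1}$, so all outgoing differentials from $E^r_{0,t+1}$ vanish and one has short exact sequences
$$0\to\Im\bigl(d^r\colon E^r_{r,t+2-r}\to E^r_{0,t+1}\bigr)\to E^r_{0,t+1}\to E^{r+1}_{0,t+1}\to 0,$$
whose left-hand term is in $\mathscr{S}$ since $E^2_{r,t+2-r}\in\mathscr{S}$ (as $t+2-r\leqslant t$ for $r\geqslant 2$). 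Moreover $E^\infty_{0,t+1}$ is the bottom piece of the filtration of $\HH{t+1}{K(I),M}$, hence embeds into $\HH{t+1}{K(I),M}\in\mathscr{S}$ and lies in $\mathscr{S}$. A descending induction on $r$ (starting from $r$ large enough that $E^r_{0,t+1}=E^\infty_{0,t+1}$) through these short exact sequences then yields $E^2_{0,t+1}=\Tor_{t+1}^R(R/I,M)\in\mathscr{S}$.

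The main obstacle is bookkeeping: the homological differentials reverse direction and the edge map flips, so $E^\infty_{0,t+1}$ sits here as a \emph{submodule} of $\HH{t+1}{K(I),M}$ (rather than as a quotient, as happens for $E_\infty^{0,t+1}$ in the cohomological proof), and the short exact sequences used in the descending induction place the image of the incoming differential on the left rather than the image of an outgoing differential on the right.
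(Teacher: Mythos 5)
Your proposal is correct in substance and follows the paper's proof essentially step for step: $(1)\Leftrightarrow(2)$ from \Cref{*rad.lem}(2), and $(1)\Leftrightarrow(3)$ from the hyperTor spectral sequence of $K(I)$ against a projective resolution of $M$, with the same two arguments in the two directions (collapse of the bounded filtration for $(1)\Rightarrow(3)$, descending induction on the pages at the corner term for $(3)\Rightarrow(1)$). The only slip is bookkeeping: having transposed the paper's indexing to $E^2_{p,q}=\Tor_q^R(\HH{p}{K(I)},M)$, the filtration sits on the Tor degree $q$, so the differentials run $d^r\colon E^r_{p,q}\to E^r_{p+r-1,q-r}$ rather than $E^r_{p,q}\to E^r_{p-r,q+r-1}$, and $E^\infty_{0,t+1}$ is a \emph{quotient} of $\HH{t+1}{K(I),M}$ (as in the paper) rather than a submodule as you assert --- the incoming differentials at $(0,t+1)$ are the ones that vanish, while the outgoing ones land in terms controlled by the inductive hypothesis; since $\mathscr{S}$ is closed under submodules, quotients and extensions, the corrected exact sequences yield the identical conclusion, so the argument stands.
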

\begin{proof} $(1)\Leftrightarrow (2)$ follows from \Cref{*rad.lem}(2).

$(1)\Rightarrow (3)$  It is known from \cite[Lemma 2.5]{GJT} that there is a spectral sequence $$E^2_{p,q} = \Tor_p^R(\HH{q}{K(I)},M) \Rightarrow \HH{p+q}{K(I), M}.$$ 
Since $I$ annihilates the homology group $\HH{q}{K(I)}$ for all $q$,  $E^2_{p,q} \in \mathscr{S}$ for $0 \leqslant p \leqslant n$ and any $q$ by \Cref{*rad.lem}(2).  Note that $E^{\infty}_{p,q}$ is isomorphic to a subquotient of $E^2_{p,q}$. We have that $E^{\infty}_{p,q} \in \mathscr{S}$ for $0 \leqslant p \leqslant n$ and any $q$. Since $E^{\infty}_{p,q}\cong \phi^pH_n/ \phi^{p-1}H_n$ and the filtration $\{\phi^pH_n\}$ of $H_n$ is bounded,  one has that $H_i(K(I), M)\in \mathscr{S}$ for $0 \leqslant i \leqslant n$.

$(3)\Rightarrow (1)$ The spectral sequence $$E^2_{p,q} = \Tor_p^R(\HH{q}{K(I)},M) \Rightarrow \HH{p+q}{K(I), M}$$ shows $E^2_{0,0} = \Tor_0^R(R/I,M)\in \mathscr{S}$. Assume that $\Tor_i^R(R/I,M)\in \mathscr{L}$ for $0 \leqslant i \leqslant t$,  we will show that $\Tor_{t+1}^R(R/I,M)\in \mathscr{L}$. Consider the sequence $$0=E^2_{{t+3},-1}\xrightarrow{} E^2_{{t+1},0} \xrightarrow{}  E^2_{{t-1},1}.$$ Since $E^2_{{t-1},1}\in \mathscr{S}$ by assumption,  we see that  $E^2_{t+1,0}=\Tor_{t+1}^R(R/I,M)\in \mathscr{S}$, provided $E^3_{t+1,0}\in\mathscr{S}$.
  Continuing this way, we see that it suffices to show that $E^\infty_{t+1,0}\in\mathscr{S}$.
  This is clear, since there is a surjection $\HH{t+1}{K(I),M}\rightarrow E^\infty_{t+1,0}$.

The proof of $(2)\Leftrightarrow (4)$ is similar to that of $(1)\Leftrightarrow (3)$.
\end{proof}

\paragraph
The following theorem that is the main result of this section, yields a number of equivalent characterizations
of $\Tor_{i\geqslant 0}^R( \mathscr{S},M)$ that are in $\mathscr{L}$.
\begin{theorem}\label{*app.thm}The following assertions are equivalent for any $R$-module $M$.
 \begin{enumerate}
  \item[\rm(1)] $\Tor_i^R(k(\fp),M)\in \mathscr{L}$ for any $\fp \in \Spec R$ such that $R/\fp \in \mathscr{S}$ and  any $i \geqslant 0$.
  \item[\rm(2)] $\Tor_i^R(S,M)\in \mathscr{L}$ for any $S\in \mathscr{S}$ and  any $i \geqslant 0$.
    \item[\rm(3)] $\Tor_i^R(R/I,M)\in \mathscr{L}$ for any ideal $I$ with $R/I\in \mathscr{S}$ and  any $i \geqslant 0$.
  \item[\rm(4)]  $H_i(K(I), M)\in \mathscr{L}$ for any ideal $I$ with $R/I\in \mathscr{S}$ and  any $i \geqslant 0$.
   \item[\rm(5)] $\Ext^i_R(R/I,M)\in \mathscr{L}$ for any ideal $I$ with $R/I\in \mathscr{S}$ and  any $i \geqslant 0$.
 \item[\rm(6)]  $H^i(K(I), M)\in \mathscr{L}$ for any ideal $I$ with $R/I\in \mathscr{S}$ and  any $i \geqslant 0$.
  \end{enumerate}
\end{theorem}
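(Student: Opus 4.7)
The plan is to cycle through conditions (1), (2), (3) using \Cref{*main-flat.thm} together with a filtered colimit argument, then to connect (3) with (4) and (5) with (6) via the two Koszul propositions, and finally to bridge the Tor and Ext sides through the self-duality of the Koszul complex.

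For the key equivalence (1) $\Leftrightarrow$ (2), I would fix a deleted flat resolution $\Bbb F : \cdots \to F_1 \to F_0 \to 0$ of $M$. Each $F_i$ is flat and therefore lies in $\mathscr{S}^{\perp}$, so \Cref{*main-flat.thm} applies with $d = 0$. Identifying $\HH{i}{k(\fp) \otimes_R \Bbb F}$ with $\Tor_i^R(k(\fp), M)$, and $\HH{i}{S \otimes_R \Bbb F}$ with $\Tor_i^R(S, M)$ (where the $i = 0$ entries are the tensor products), the equivalence of conditions (2) and (4) in \Cref{*main-flat.thm} becomes precisely (1) $\Leftrightarrow$ (2) here.

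The implication (2) $\Rightarrow$ (3) is immediate since $R/I \in \mathscr{S}$. For (3) $\Rightarrow$ (1), I would use $k(\fp) = \varinjlim R/\fp$ as the filtered colimit of copies of $R/\fp$ along multiplication by elements of $R \setminus \fp$ (as in the proof of (1) $\Rightarrow$ (2) of \Cref{*main-flat.thm}). Since Tor commutes with filtered colimits and $\mathscr{L}$ is closed under direct limits, $\Tor_i^R(k(\fp), M) \cong \varinjlim \Tor_i^R(R/\fp, M) \in \mathscr{L}$.

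For the Koszul portion, since $\mathscr{L}$ is itself a Serre subcategory, the proofs of \Cref{Koszul-ho.prop} and \Cref{Koszul-coh.prop} go through verbatim with $\mathscr{S}$ replaced by $\mathscr{L}$, yielding (3) $\Leftrightarrow$ (4) and (5) $\Leftrightarrow$ (6) ideal by ideal. To close the cycle I would invoke the self-duality of the Koszul complex on $n$ generators, namely $\Hom_R(K(I), R) \cong K(I)[-n]$, coming from the perfect wedge pairing $\bigwedge^p R^n \otimes_R \bigwedge^{n-p} R^n \to R$. Tensoring with $M$ produces an isomorphism $H^i(K(I), M) \cong H_{n-i}(K(I), M)$, and since $K(I)$ is a bounded complex of length $n$ the two families $\{H_i(K(I), M)\}_{i \geqslant 0}$ and $\{H^i(K(I), M)\}_{i \geqslant 0}$ agree as sets of modules, giving (4) $\Leftrightarrow$ (6). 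The main obstacle is the implication (1) $\Rightarrow$ (2), which rests on the full strength of \Cref{*main-flat.thm}; the remaining equivalences are bookkeeping around Koszul homology and the direct-limit stability of $\mathscr{L}$.
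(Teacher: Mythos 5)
Your proposal is correct and follows essentially the same route as the paper: $(1)\Leftrightarrow(2)$ via \Cref{*main-flat.thm} applied with $d=0$ to a flat resolution, the Koszul propositions (valid for $\mathscr{L}$ since it is Serre) for $(3)\Leftrightarrow(4)$ and $(5)\Leftrightarrow(6)$, and self-duality $H^i(K(I),M)\cong H_{t-i}(K(I),M)$ for $(4)\Leftrightarrow(6)$. The only (harmless) variation is that you close the cycle with $(3)\Rightarrow(1)$ via $k(\fp)=\varinjlim R/\fp$ and the direct-limit closure of $\mathscr{L}$, whereas the paper instead proves $(3)\Rightarrow(2)$ by the d\'evissage argument of \Cref{*cyclic.lem}.
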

\begin{proof}  $(1)\Leftrightarrow (2)$ follows from \Cref{*main-flat.thm}.

$(2)\Rightarrow (3)$ is obvious.

$(3)\Rightarrow (2)$ follows from \Cref{*cyclic.lem}.

$(3)\Leftrightarrow (4)$ and $(5)\Leftrightarrow (6)$ follow from \Cref{Koszul-ho.prop} and \Cref{Koszul-coh.prop} respectively.

$(4)\Leftrightarrow (6)$  Suppose $(a_1,a_2,\cdots,a_t)$ is a system of generators of $I$. Since $H_i(K(I), M)\cong H^{t-i}(K(I), M)$  by \cite[Proposition 1.6.10]{BH}, we get the equivalence of $(4)\Leftrightarrow (6)$.
\end{proof}

\section{Characterizations of some special rings}
\paragraph
In this section, we focus on applying our previous results to new characterizations of some well-known commutative notherian rings.
\begin{Definition}\label{*relative-flat-inj.def}(\cite[Definitions 2.6.1 and 2.7.1]{SS}) Let $I$ be an ideal of $R$ and $M$ an $R$-module.
\begin{enumerate}
  \item[\rm(1)] $M$ is called {\em relatively-$I$-flat} if $\Tor^R_i(R/J,M)=0$ for all $i\geqslant 1$ and every $I$-open ideal $J$, that is to say for all ideals $J$ containing some power $I^t$ of $I$.
\item[\rm(2)] $M$ is called {\em relatively-$I$-injective} if $\Ext^R_i(R/J,M)=0$ for all $i\geqslant 1$ and every $I$-open ideal $J$.
  \end{enumerate}
\end{Definition}
\paragraph
If $I$ is a nilpotent ideal of $R$, then the class of relatively-$I$-flat (resp.  relatively-$I$-injective) $R$-modules coincides with the class of flat (resp. injective) $R$-modules. There are some non-trivial examples of  relatively-$I$-flat and relatively-$I$-injective modules.
\begin{example}\label{*relative-flat-inj.exm} 
\begin{enumerate}
  \item[\rm(1)] Let $R$ be a Noetherian domain and $I$ an ideal which is not contained in $\rad(R)$. Consider the natural map $\tau_R^I: R \xrightarrow{} \hat{R}^I$ where $\hat{R}^I:=\varprojlim R/I^t$ is the $I$-adic completion of $R$. Then $\Coker \tau_R^I$ is a non-flat relatively-$I$-flat module(see \cite[Example 2.6.2]{SS}).
    \item[\rm(2)] Let $(R, \fm, k)$ be a Gorenstein local ring with Krull dimension $d>1$. Let $\fp \in \Spec R\backslash \{\fm\}$ such that $\height(\fp)\neq 0$. Set $M:=E(R/\fp)$. Then $0<\flatdim_R M=\height(\fp)<d$ by \cite[Proposition 2.1]{Xu}. Since $M$ is injective and  $\Hom_R(k, M)=0$,  we have that $\Tor_i^R(R/J, M)=0$ for any $i \geqslant 0$ and every $\fm$-open ideal $J$ by \Cref{*app.thm}. So $M$ is a non-flat relatively-$\fm$-flat module. Next we set $M':=\Hom_R(M,M^{(X)})$ for some set $X$.  It follows from \cite[Theorem 3.2.11]{EJ} that $k\otimes_R M'\cong \Hom_R(\Hom_R(k,M),M^{(X)})=0$. Since $M'$ is a flat $R$-module, we have that $\Ext_R^i(R/J, M')=0$ for any $i \geqslant 0$ and every $\fm$-open ideal $J$ by \Cref{*app.thm} again.  On the other hand, $0<\injdim_R M'=\height(\fp)<d$ by \cite[Proposition 3.1]{Xu}, thus $M'$ is a non-injective relatively-$\fm$-injective module.
  \end{enumerate}      
\end{example}

\begin{Definition}\label{*flat-dimension.def} If $I$ is an ideal of $R$ and $M\in \Mod R$, then the {\em relatively-$I$-flat dimension} $\flatdim_R^I M  \leqslant n$ if there is an exact sequence$$0 \xrightarrow{}  F_n \xrightarrow{} \cdots  \xrightarrow{} F_1 \xrightarrow{} F_0 \xrightarrow{}  M \xrightarrow{}  0$$ such that $F_i$ is a relatively-$I$-flat module for each $i$. If no such finite resolution exists, then $\flatdim_R^I M=\infty$. Dually, we have the notion of {\em relatively-$I$-injective dimension}.
\end{Definition}

\begin{proposition}\label{*flat-dimension.prop} Let $I$ be an ideal of $R$ and $M$ an $R$-module. Then the following assertions are equivalent for any non-negative integer $n$.
\begin{enumerate}
  \item[\rm(1)]  $\flatdim_R^I M  \leqslant n$.
 \item[\rm(2)]  $\Tor_i^R(R/J,M)=0$ for all $i\geqslant n+1$ and every $I$-open ideal $J$.
  \item[\rm(3)] For any exact sequence $$0 \xrightarrow{}  K_n \xrightarrow{} F_{n-1} \xrightarrow{}  \cdots  \xrightarrow{} F_0 \xrightarrow{}  M \xrightarrow{}  0$$ such that $F_i$ is a relatively-$I$-flat module for each $i$, $K_n$ is a relatively-$I$-flat module.
   \item[\rm(4)]  $\Tor_i^R(k(\fp),M)=0$ for all $i\geqslant n+1$ and every prime ideal $\fp\in V(I)$.
    \item[\rm(5)]  $\Tor_i^R(k(\fp),M)=0$ for all $i\geqslant n+1$ and every prime ideal $\fp\in V(I)\cap S(R)$.
     \end{enumerate}
\end{proposition}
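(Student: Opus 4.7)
The plan is to split the six conditions into two blocks and handle them with different techniques: the intrinsic block $(1)\Leftrightarrow(2)\Leftrightarrow(3)$ will be dispatched by routine dimension shifting, while the comparison block $(2)\Leftrightarrow(4)\Leftrightarrow(5)$ will be deduced from \Cref{*main-flat.thm}.

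For the first block, the cornerstone is that every free (indeed every flat) $R$-module is relatively-$I$-flat, since $\Tor^R_{\geqslant 1}(-,F)=0$ is in particular valid on $R/J$ for any $I$-open ideal $J$. Thus a free resolution is a fortiori a relatively-$I$-flat resolution. The implication $(3)\Rightarrow(1)$ will then follow at once: truncate a free resolution of $M$ at stage $n-1$ and invoke (3) to promote the $n$-th syzygy into a relatively-$I$-flat module. For $(1)\Rightarrow(2)$, I will compute $\Tor^R_*(R/J,M)$ using a length-$n$ relatively-$I$-flat resolution; each module in the resolution is $\Tor^R(R/J,-)$-acyclic, so the Tor vanishes in degrees $\geqslant n+1$. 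The reverse $(2)\Rightarrow(3)$ is dual: splitting the given exact sequence into short exact sequences and iterating the long exact sequence of Tor yields $\Tor^R_j(R/J,K_n)\cong \Tor^R_{j+n}(R/J,M)=0$ for all $j\geqslant 1$ and all $I$-open $J$, so $K_n$ is relatively-$I$-flat.

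For the comparison block I will apply \Cref{*main-flat.thm} directly. Take $\Bbb F:\cdots\to F_1\to F_0\to 0$ to be a deleted projective resolution of $M$, and set $\mathscr{S}:=\{N\in\Mod R\mid \Supp N\subseteq V(I)\}$ and $\mathscr{L}:=0$. Each $F_i$ lies in $\mathscr{S}^{\perp}$, as required. Since $I$ is finitely generated, $R/J\in\mathscr{S}$ is equivalent to $J$ being $I$-open, and $R/\fp\in\mathscr{S}$ is equivalent to $\fp\in V(I)$; moreover $\HH{i}{A\otimes_R\Bbb F}=\Tor_i^R(A,M)$ for every $R$-module $A$. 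With this dictionary, \Cref{*main-flat.thm} with $d=n+1$ immediately gives $(2)\Leftrightarrow(4)$. For $(4)\Leftrightarrow(5)$ I will invoke the final assertion of \Cref{*main-flat.thm}; its hypothesis that $\HH{i}{\Bbb F}_\fp\in\mathscr{L}$ for $\fp\in \Spec R\setminus S(R)$ and $i\geqslant d$ is vacuous in this setting, since $\HH{i}{\Bbb F}=0$ for $i\geqslant 1$.

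I do not foresee a serious obstacle. The subtlest point is the passage from cyclic modules $R/J$ (for $J$ $I$-open) to arbitrary objects of $\mathscr{S}$, but this is exactly what the equivalence $(1)\Leftrightarrow(4)$ of \Cref{*main-flat.thm} accomplishes once the dictionary above is set up, with \Cref{*cyclic.lem} supplying the cyclic-to-general reduction behind the scenes.
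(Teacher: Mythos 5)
Your proposal is correct and takes essentially the same route as the paper: the paper likewise treats $(1)\Leftrightarrow(2)\Leftrightarrow(3)$ as routine (your dimension-shifting argument is the intended one) and deduces $(2)\Leftrightarrow(4)\Leftrightarrow(5)$ by applying \Cref{*main-flat.thm} with $\mathscr{L}=0$ and $\mathscr{S}$ the Serre subcategory of (finitely generated) modules supported in $V(I)$, the last assertion of that theorem handling $(4)\Leftrightarrow(5)$ exactly as you describe. Your only deviation --- taking $\mathscr{S}$ to be all modules supported in $V(I)$ rather than the finitely generated ones --- is immaterial, since a projective resolution lies in $\mathscr{S}^{\perp}$ either way and the relevant cyclic and prime cyclic quotients are the same.
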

\begin{proof} It is routine to check the equivalences of $(1)\Leftrightarrow (2)\Leftrightarrow(3)$.

$(2) \Leftrightarrow (4) \Leftrightarrow (5)$ follow from \Cref{*main-flat.thm} by taking $\mathscr{S}:=\{M\in \mod R\mid  \Supp M\in V(I) \}$ and $\mathscr{L}=0$.
\end{proof}

\paragraph
 It can be easily induced from \Cref{*flat-dimension.prop} that an $R$-module $M$ is flat if and only if $\Tor_i^R(k(\fp), M)=0$ for any $\fp \in \Spec R$ or $S(R)$ and any $i \geqslant 1$.  But the following example illustrates that the result is not true when $i$ is restricted to be one. 
 
\begin{example}\label{} Let $(R, \fm, k)$ be a regular local ring with Krull dimension $d>1$. Set $M:=E(k)$.  In view of \cite[Example 14]{H2}, we have that $\Tor_1^R(k(\fp), M)=0$ for any $\fp \in \Spec R$, but $M$ is not flat.  
\end{example}

\begin{corollary}\label{*flat-rel.flat.cor}  If $I$ is an ideal of $R$ such that $I\subset \rad(R)$ and $M\in \mod R$, then the following assertions hold.
\begin{enumerate}
  \item[\rm(1)] $\flatdim_R^I M=\flatdim_R M$, $\injdim_R^I M=\injdim_R M$.
   \item[\rm(2)] $gl.dim R=\sup \{ \flatdim_R^I M\mid M\in \Mod R\}=\sup \{ \injdim_R^I M\mid M\in \Mod R\}$, where $gl.dimR$ denotes the global dimension of $R$.
   \end{enumerate}
\end{corollary}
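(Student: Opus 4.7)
First, I will dispatch (1). The easy direction $\flatdim_R^I M \leq \flatdim_R M$, and likewise $\injdim_R^I M \leq \injdim_R M$, holds because any flat (resp.\ injective) resolution automatically consists of relatively-$I$-flat (resp.\ relatively-$I$-injective) modules. For the reverse, the plan is to combine the Tor-vanishing criterion of \Cref{*flat-dimension.prop} with the hypothesis $I \subset \rad(R)$.

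Specifically, suppose $\flatdim_R^I M \leq n$. Since $I \subset \rad(R)$ we have $\Max R \subset V(I)$, so \Cref{*flat-dimension.prop} yields $\Tor_i^R(k(\fm), M) = 0$ for every $\fm \in \Max R$ and $i \geq n+1$. After localizing, $\Tor_i^{R_\fm}(k(\fm), M_\fm) = 0$; since $M_\fm$ is finitely generated over the noetherian local ring $R_\fm$, the Auslander--Buchsbaum--Serre criterion gives $\projdim_{R_\fm} M_\fm \leq n$, and therefore $\flatdim_R M = \projdim_R M = \sup_{\fm \in \Max R} \projdim_{R_\fm} M_\fm \leq n$. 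The injective case follows the same blueprint: an analogue of \Cref{*flat-dimension.prop} for injective dimension---established by a standard dimension-shift, using that every maximal ideal is $I$-open---shows that $\injdim_R^I M \leq n$ forces $\Ext^i_R(R/\fm, M) = 0$ for every $\fm \in \Max R$ and $i \geq n+1$. Bass's characterization of the injective dimension of a finite module over a noetherian ring in terms of such Ext groups then delivers $\injdim_R M \leq n$.

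For (2), I will combine (1) with the classical identity $gl.dim R = \sup\{\projdim_R M : M \in \mod R\}$ together with the equality $\projdim_R M = \flatdim_R M$ for finite $M$. These yield $\sup\{\flatdim_R^I M : M \in \mod R\} = gl.dim R$, giving the lower bound $\sup\{\flatdim_R^I M : M \in \Mod R\} \geq gl.dim R$. The matching upper bound is the elementary chain $\flatdim_R^I M \leq \flatdim_R M \leq \projdim_R M \leq gl.dim R$, valid for any $M \in \Mod R$. The injective statement is handled in parallel, invoking the commutative right-left symmetry $gl.dim R = \sup_{M \in \Mod R} \injdim_R M$: when $gl.dim R < \infty$, the ring $R$ itself is a finite module with $\injdim_R R = \dim R = gl.dim R$; when $gl.dim R = \infty$ one picks a non-regular localization $R_\fm$ and uses Bass's formula to conclude $\injdim_R(R/\fm) = \infty$ for the finite $R$-module $R/\fm$.

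The principal obstacle is arranging the injective analogue of \Cref{*flat-dimension.prop}, which the paper states only in the flat version. This is a routine dimension-shifting argument: any $n$-th cosyzygy in an injective resolution of $M$ satisfying $\Ext^1_R(R/J, -) = 0$ for every $I$-open $J$ is itself relatively-$I$-injective, giving the desired equivalence between $\injdim_R^I M \leq n$ and the vanishing of $\Ext^i_R(R/J, M)$ for all $i \geq n+1$ and all $I$-open $J$.
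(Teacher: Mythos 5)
Your argument is correct and follows essentially the same route as the paper: the easy inequality by definition, then \Cref{*flat-dimension.prop} (and its routine injective analogue) combined with $\Max R\subset V(I)$, localization, finiteness of $M$, and Bass-type globalization formulas, with part (2) reduced to part (1) via the description of $gl.\dim R$ by finite (cyclic) modules. The only blemish is cosmetic: the fact that $\Tor_{i>n}^{R_\fm}(k(\fm),M_\fm)=0$ bounds $\projdim_{R_\fm}M_\fm$ for finite $M_\fm$ is the local criterion read off a minimal free resolution, not the Auslander--Buchsbaum--Serre regularity criterion.
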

\begin{proof}  (1) Since $\flatdim^I_R M\leqslant \flatdim_R M$ by definition, we will show that $\flatdim_R M\leqslant \flatdim_R^I M$. Assume that $\flatdim_R^I M=n$.
For any $\fm\in \Max R$, since $I\subset \rad(R)$, $\fm\in V(I)$. In view of \Cref{*flat-dimension.prop}, $\Tor_i^{R_{\fm}}(k(\fm), M_{\fm})=0$ for any $i \geqslant n+1$. As $M_{\fm}$ is a finitely generated $R_{\fm}$-module, $\flatdim_{R_{\fm}} M_{\fm}\leqslant n$. Thus the fact that $$\flatdim_R M=\sup\{ \flatdim_{R_{\fm}} M_{\fm}\mid \fm \in \Max R\}$$ means that  $\flatdim_R M\leqslant n$.  Also note that $$\injdim_R M=\sup\{ \injdim_{R_{\fm}} M_{\fm}\mid \fm \in \Max R\}$$ by \cite[Corollary 2.3]{Ba}. The other equation can be shown in a similar way.

(2) Set $m=\sup \{ \flatdim_R^I M\mid M\in \Mod R\}$ and $n=\sup \{ \injdim_R^I M\mid M\in \Mod R\}.$
It is known from \cite{Lam} that  $gl.dim R=\sup \{ \projdim_R M\mid \text{$M$ is a cyclic $R$-module}\}$
 $$\,\,\,\,\,\,\,\,\,\,\,\,\,\, \,\,\,\,\,\,\,\,\,\,\,\,\,\,\,\,\,\,\,\,\,\,\,\,\, \,\,\,\,\,\,\,\,\,\,\,\,\,\,\,\,\,\,\,\,\,\,\,\,\, \,\,\,\,\,\,\,\,\,\,\,\,\,=\sup \{ \injdim_R M\mid \text{$M$ is a cyclic $R$-module}\}.$$ As $m \leqslant $ $gl.dimR$ and $n \leqslant $ $gl.dimR$, we have  $gl.dim R=m=n$ by (1).
\end{proof}

\begin{theorem}\label{*regular.thm}  Let $I$ be an ideal of $R$ such that $I\subset \rad(R)$. Then the following assertions are equivalent.
\begin{enumerate}
  \item[\rm(1)] $R_{\fp}$ is regular for any $\fp \in \Spec R$.
   \item[\rm(2)]   $\flatdim_R^I k(\fp)< \infty$ for any $\fp \in \Spec R$.
    \item[\rm(3)]  $\flatdim_R^I k(\fm)< \infty$ for any $\fm \in \Max R$.
     \item[\rm(4)]  $\injdim_R^I k(\fp)< \infty$ for any $\fp \in \Spec R$.
      \item[\rm(5)]  $\injdim_R^I k(\fm)< \infty$ for any $\fm \in \Max R$.
     \end{enumerate}
\end{theorem}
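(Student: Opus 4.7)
The plan is to prove the circular implications $(1)\Rightarrow(2)\Rightarrow(3)\Rightarrow(1)$ and $(1)\Rightarrow(4)\Rightarrow(5)\Rightarrow(1)$; the implications $(2)\Rightarrow(3)$ and $(4)\Rightarrow(5)$ hold trivially since $\Max R\subseteq\Spec R$.

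For $(1)\Rightarrow(2)$ and $(1)\Rightarrow(4)$, I fix $\fp\in\Spec R$ and set $d:=\dim R_\fp$, which is finite because $R_\fp$ is regular. Regularity yields $\projdim_{R_\fp}k(\fp)=\injdim_{R_\fp}k(\fp)=d$. Since $k(\fp)$ is an $R_\fp$-module and $R/J$ is finitely presented, the standard change-of-rings identities $\Tor_i^R(R/J,k(\fp))\cong\Tor_i^{R_\fp}((R/J)_\fp,k(\fp))$ and $\Ext_R^i(R/J,k(\fp))\cong\Ext_{R_\fp}^i((R/J)_\fp,k(\fp))$ force both sides to vanish for every ideal $J$ whenever $i>d$. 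Combining with \Cref{*flat-dimension.prop} and its injective analog (see below) then yields $\flatdim_R^I k(\fp)\leqslant d$ and $\injdim_R^I k(\fp)\leqslant d$.

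For $(3)\Rightarrow(1)$, fix $\fm\in\Max R$. Since $I\subseteq\rad(R)\subseteq\fm$, the ideal $\fm$ is $I$-open, so \Cref{*flat-dimension.prop} forces $\Tor_i^R(R/\fm,k(\fm))=0$ for all sufficiently large $i$; localizing gives $\Tor_i^{R_\fm}(k(\fm),k(\fm))=0$, whence $R_\fm$ is regular by Auslander--Buchsbaum--Serre. Any prime $\fp\in\Spec R$ is contained in some maximal $\fm$, and $R_\fp$ is a localization of the regular local ring $R_\fm$, hence regular. The parallel argument for $(5)\Rightarrow(1)$ produces $\Ext_{R_\fm}^i(k(\fm),k(\fm))=0$ for large $i$; the $k(\fm)$-linear duality $\Ext_{R_\fm}^i(k(\fm),k(\fm))\cong\Tor_i^{R_\fm}(k(\fm),k(\fm))^{*}$ then reduces matters to the previous case.

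The main obstacle is establishing the injective analog of \Cref{*flat-dimension.prop}, since the paper defines $\injdim_R^I$ only dually and does not spell it out. I will prove it by standard dimension shifting: starting from any injective coresolution $0\to M\to I^0\to I^1\to\cdots$ (where each $I^j$ is $R$-injective, hence relatively-$I$-injective), set $K:=\ker(I^n\to I^{n+1})$ and use the short exact sequences induced by the coresolution to obtain $\Ext_R^j(R/J,K)\cong\Ext_R^{j+n}(R/J,M)$ for $j\geqslant 1$. Consequently, vanishing of $\Ext_R^{i}(R/J,M)$ for $i\geqslant n+1$ and $I$-open $J$ is equivalent to $K$ being relatively-$I$-injective, and hence to $\injdim_R^I M\leqslant n$; this delivers both directions needed in the arguments above.
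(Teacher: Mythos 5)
Your argument is correct, and it differs from the paper's in two substantive ways. First, for the injective statements: the paper deduces $(2)\Leftrightarrow(4)$ and $(3)\Leftrightarrow(5)$ in one stroke from the faithfulness of Matlis duality, via $\Hom_{R_{\fp}}(\Tor_i^R(R/J,k(\fp)),E(R/\fp))\cong \Ext^i_R(R/J,k(\fp))$, so that the relative Tor- and Ext-vanishing conditions are literally equivalent ideal by ideal; you instead run a second full cycle $(1)\Rightarrow(4)\Rightarrow(5)\Rightarrow(1)$, using $\injdim_{R_{\fp}}k(\fp)=\dim R_{\fp}$ on one side and the $k(\fm)$-linear duality $\Ext^i_{R_{\fm}}(k(\fm),k(\fm))\cong\Hom_{k(\fm)}(\Tor_i^{R_{\fm}}(k(\fm),k(\fm)),k(\fm))$ on the other. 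This costs you the explicit construction of the injective analogue of \Cref{*flat-dimension.prop} (which the paper leaves implicit but also relies on in \Cref{*flat-rel.flat.cor}), and your dimension-shifting sketch for it is standard and fine. Second, for $(3)\Rightarrow(1)$: the paper routes through \Cref{*flat-rel.flat.cor} (which converts $\flatdim_R^I k(\fm)$ into the honest $\flatdim_R k(\fm)$ for the finitely generated module $k(\fm)=R/\fm$) and then cites Lam's global criterion; you go directly from the observation that $\fm$ is $I$-open to $\Tor_i^{R_{\fm}}(k(\fm),k(\fm))=0$ for $i\gg0$ and invoke the local Serre criterion, then spread regularity to all primes by localizing $R_{\fm}$. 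Your route is more self-contained (it bypasses \Cref{*flat-rel.flat.cor} entirely), while the paper's is shorter given the machinery it has already set up; both are valid.
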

\begin{proof} $(1)\Rightarrow (2)$ Since $R_{\fp}$ is regular, $\flatdim_R R/\fp=\projdim_R R/\fp<\infty$. Thus $\flatdim_R^I k(\fp)\leqslant \flatdim_R k(\fp)< \infty$ for any $\fp \in \Spec R$. 

$(2)\Rightarrow (3)$ is obvious.

$(3)\Rightarrow (1)$ By \Cref{*flat-rel.flat.cor} we have that $\flatdim_R k(\fm)=\flatdim_R^I k(\fm)<\infty$ for any $\fm \in \Max R$. Then $\projdim_R \fm<\infty$ and thus $R_{\fp}$ is regular for any $\fp \in \Spec R$ by \cite[Theorem 5.94]{Lam}.

$(2) \Leftrightarrow (4)$  and $(3) \Leftrightarrow (5)$ follow from the isomorphisms $$\Hom_{R_{\fp}}(\Tor_i^R(R/J,k(\fp)),E(R/\fp))\cong \Ext^i_R(R/J, \Hom_{R_{\fp}}(k(R/\fp),E(R/\fp)))$$$$\,\,\,\,\,\,\,\,\,\,\,\, \,\,\,\,\,\,\,\,\,\,\,\,\,\,\,\,\,\,\,\,\,\,\,\,\,\,\,\,\,\,\,\,\, \cong \Ext^i_R(R/J, k(R/\fp))$$ for each integer $i$, each $I$-open ideal $J$ and any $\fp \in \Spec R$.
\end{proof}

\begin{proposition}\label{*semisimple.prop} Let $I$ be an ideal of $R$ such that $I\subset \rad(R)$.  Then the following assertions are equivalent.
\begin{enumerate}
  \item[\rm(1)]  $R$ is semisimple.
 \item[\rm(2)] $gl.dim R<\infty$ and every cyclic $R$-module can be embedded into a relatively-$I$-flat module.
     \end{enumerate}
\end{proposition}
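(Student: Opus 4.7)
The direction $(1)\Rightarrow(2)$ is essentially immediate: if $R$ is semisimple then $gl.dim R = 0 < \infty$, every $R$-module is projective and hence flat (in particular, relatively-$I$-flat), so any cyclic module embeds (trivially) in itself, which is relatively-$I$-flat.

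For the nontrivial direction $(2)\Rightarrow(1)$, set $n = gl.dim R < \infty$ and argue by contradiction that $n\geq 1$ is impossible. Fix an arbitrary cyclic $R$-module $C = R/J$; by hypothesis it embeds in a relatively-$I$-flat module $F$, yielding a short exact sequence
$$0 \longrightarrow C \longrightarrow F \longrightarrow X \longrightarrow 0.$$
For every $I$-open ideal $J'$, the long exact sequence of $\Tor^R(R/J', -)$ combined with the vanishing $\Tor_i^R(R/J', F) = 0$ for all $i\geq 1$ (which is the definition of relatively-$I$-flatness) yields the degree-shift isomorphisms
$$\Tor_{i+1}^R(R/J', X) \cong \Tor_i^R(R/J', C) \quad \text{for all } i\geq 1.$$
Since $\projdim_R X \leq gl.dim R = n$, the left-hand side vanishes as soon as $i+1 > n$, i.e., $i\geq n$. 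Hence $\Tor_i^R(R/J', C) = 0$ for all $i\geq n$ and every $I$-open ideal $J'$, and Proposition \ref{*flat-dimension.prop} gives $\flatdim_R^I C \leq n-1$.

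Now invoke Corollary \ref{*flat-rel.flat.cor}(1): because $C$ is finitely generated and $I\subset \rad(R)$, we have $\flatdim_R C = \flatdim_R^I C \leq n-1$, and since $C$ is finitely generated over the noetherian ring $R$, also $\projdim_R C = \flatdim_R C \leq n-1$. As $C$ was an arbitrary cyclic module and $gl.dim R = \sup\{\projdim_R R/J : J \text{ an ideal of } R\}$ (the fact from \cite{Lam} already used in the proof of Corollary \ref{*flat-rel.flat.cor}(2)), we conclude $gl.dim R \leq n-1$, contradicting $n = gl.dim R \geq 1$. Hence $n = 0$, so $R$ is semisimple. The only real step is the $\Tor$-degree shift via the embedding $C\hookrightarrow F$: once that is in place, the finiteness of $gl.dim R$ immediately forces a drop in the flat dimension of every cyclic, and Corollary \ref{*flat-rel.flat.cor} converts this into a drop of the global dimension itself, producing the contradiction.
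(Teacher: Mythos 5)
Your argument is correct and follows essentially the same route as the paper: embed a cyclic module into a relatively-$I$-flat module, use the resulting $\Tor$-degree shift together with \Cref{*flat-dimension.prop} to drop the relatively-$I$-flat dimension below $gl.\dim R$, and then apply \Cref{*flat-rel.flat.cor}(1) to convert this into a drop of projective dimension, yielding the contradiction. The only cosmetic difference is that the paper fixes one cyclic module attaining $\projdim_R M=gl.\dim R$, whereas you show the uniform bound for all cyclic modules and invoke the global dimension formula.
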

\begin{proof} $(1)\Rightarrow (2)$ is trivial.

$(2)\Rightarrow (1)$ Suppose $gl.dim R=n>0$,  then there is a cyclic $R$-module $M$ such that $\projdim_R M=n$. By assumption, there is an exact sequence $0\xrightarrow{}  M\xrightarrow{}  F\xrightarrow{}  M'\xrightarrow{}  0$ with $F$ relatively-$I$-flat. We deduce by \Cref{*flat-rel.flat.cor} that $\flatdim_R^I M'  \leqslant n$, which implies that 
$\projdim_R M= \flatdim_R^I M<n$ by \Cref{*flat-dimension.prop}. This is a contradiction. So $gl.dim R=0$ and $R$ is semisimple.
\end{proof}

\paragraph
Let $\mathcal {F}$ be a class of $R$-modules, by an
{\em $\mathcal {F}$-preenvelope} of an $R$-module $M$ we mean a
homomorphism $\varphi:M\rightarrow F$ with $F\in \mathcal {F}$ such
that for any homomorphism $f: M\rightarrow F'$ with $F'\in \mathcal
{F}$, there is a homomorphism $g: F\rightarrow F'$ such that $g\circ\varphi=f$(see \cite{EJ}).

\paragraph
Let $M$ and $N$ be $R$-modules. Since every $R$-module over a noetherian ring $R$ has a flat preenvelope by \cite[Proposition 6.5.1]{EJ}, there is a complex 
$$\Bbb F: 0\xrightarrow{}  N\xrightarrow{}  F_{0}\xrightarrow{} F_1\xrightarrow{}  \cdots $$ such that $F_i$ is flat and $\Hom_R(\Bbb F,G)$ is exact for any flat $R$-module $G$.
For $i\geqslant 0$, in the language of \cite[Section 8] {EJ},  the  $i$-th right derived functor of $-\otimes_R-$ is defined as $\Tor^i_R(M,N):=\H_i(M\otimes_R \Bbb F\textbf{.})$, where $ \Bbb F\textbf{.}$ denotes the deleted complex of  $\Bbb F$.
\begin{proposition}\label{*hereditary.prop} If $\Tor^0_R(k(\fp),J)=0$ for every ideal $J$ of $R$ and each $\fp \in \Spec R$, then $R$ is hereditary (i.e., $gl.dim R\leqslant 1$).
\end{proposition}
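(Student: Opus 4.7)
The plan is to show that every ideal $J$ of $R$ is projective, which is equivalent to $\mathrm{gl.dim}\,R\leqslant 1$. Since $R$ is noetherian, it suffices to show that every ideal is flat; for a finitely generated module over a noetherian ring, this reduces by localisation and Nakayama to verifying $\Tor_1^R(k(\fm),J)=0$ for every maximal ideal $\fm$.

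To extract this classical $\Tor_1$-vanishing from the hypothesis, I would unpack the definition of $\Tor^0$. For a flat preenvelope coresolution $\mathbb{F}\colon 0\to J\to F^0\to F^1\to\cdots$, exactness gives $J=\ker(F^0\to F^1)$, so the composition $M\otimes_R J\to M\otimes_R F^0\to M\otimes_R F^1$ is zero for every $M$. Hence there is a canonical homomorphism $M\otimes_R J\to\Tor^0_R(M,J)=\ker(M\otimes_R F^0\to M\otimes_R F^1)$ obtained by factoring through the image of $M\otimes_R J\to M\otimes_R F^0$. Specialising to $M=k(\fp)$ and invoking the hypothesis, the canonical image of $k(\fp)\otimes_R J$ in $k(\fp)\otimes_R F^0$ must vanish. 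Combining this with the short exact sequence $0\to J\to F^0\to F^0/J\to 0$ (with $F^0$ flat) and the defining preenvelope-exactness property that $\Hom_R(\mathbb{F},G)$ is exact for every flat $G$, one should deduce $\Tor_1^R(k(\fp),J)=0$ for every ideal $J$ and every $\fp\in\Spec R$.

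Once that classical vanishing is available, localising at each maximal ideal $\fm$ gives $\Tor_1^{R_\fm}(k(\fm),J_\fm)=0$, whence the finitely generated $R_\fm$-module $J_\fm$ is free by the standard local criterion; consequently $J$ is locally free, hence projective. Thus every ideal is projective, giving $\mathrm{gl.dim}\,R\leqslant 1$. The main obstacle will be the bridge from the Enochs--Jenda relative $\Tor^0_R$ to the classical $\Tor_1^R$: this is the step where the flat preenvelope machinery of \cite{EJ} must be used in earnest, via comparison of the two derived complexes, and it is the only place where the hypothesis genuinely enters.
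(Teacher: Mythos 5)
Your endgame is sound: for an ideal $J$ of a noetherian ring, proving $\Tor_1^{R}(k(\fm),J)=0$ for every maximal ideal $\fm$ does give $J_\fm$ free over $R_\fm$, hence $J$ projective and $\mathrm{gl.dim}\,R\leqslant 1$. The problem is the bridge from $\Tor^0_R(k(\fp),J)=0$ to that classical vanishing, which you correctly identify as the crux but leave as ``one should deduce'' --- and the partial information you extract from the hypothesis is provably insufficient to build it. Write $d\colon F^0\to F^1$ for the composite $F^0\to\Coker(J\to F^0)\to F^1$ in the preenvelope coresolution; the hypothesis says $\Ker(k(\fp)\otimes d)=0$ on \emph{all} of $k(\fp)\otimes F^0$. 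You only use the weaker consequence that the image of $k(\fp)\otimes J\to k(\fp)\otimes F^0$ vanishes. Feeding that into the long exact sequence of $0\to J\to F^0\to F^0/J\to 0$ gives $\Tor_1^R(k(\fp),F^0/J)\cong k(\fp)\otimes J$ and $\Tor_{i+1}^R(k(\fp),F^0/J)\cong\Tor_i^R(k(\fp),J)$ for $i\geqslant 1$, which says nothing about $\Tor_1^R(k(\fp),J)$; the $\Hom_R(\mathbb{F},G)$-exactness for flat $G$ does not rescue this, since it controls maps out of the coresolution, not Tor against $k(\fp)$. (A smaller point: $\mathbb{F}$ is not exact as a complex of $R$-modules, only a complex, so ``$J=\Ker(F^0\to F^1)$'' is unjustified --- though you only need that the composite is zero, which holds by construction.)

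The ingredient you are missing is the acyclicity criterion for complexes of flat modules tested against residue fields (Theorem~\ref{*Hashimoto.thm}, in the form of \cite[Lemma 2.1.4]{H1}), applied to the two-term complex $0\to F^0\xrightarrow{d}F^1$: since $F^0$ and $F^1$ are flat and $k(\fp)\otimes d$ is injective for every $\fp\in\Spec R$, the map $d$ itself is a pure monomorphism with flat cokernel. Because the preenvelope $f\colon J\to F^0$ is monic (as $J$ embeds in the flat module $R$) and $\Im f\subseteq\Ker d$, this forces $J$ to be flat, hence projective. That theorem is exactly where the hypothesis ``genuinely enters,'' and it is not a formal consequence of the local criterion for flatness --- note the paper's own example that $\Tor_1^R(k(\fp),M)=0$ for all $\fp$ does not imply flatness for general $M$ --- so without invoking it (or reproving it) your argument does not close.
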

\begin{proof} Given an ideal $J$ of $R$,  there is monic flat preenvelope $f: J\xrightarrow{} F_0$.  Let $p:  F_0\xrightarrow{} \Coker f $ be the natural epimorphism. Then we get a flat preenvelope $g: \Coker f \xrightarrow{} F_1$.  Thus we obtain a complex $0\xrightarrow{} F_0 \xrightarrow{d} F_1$ with $d=gp$. By assumption, $k(\fp)\otimes d$ is injective for each $\fp \in \Spec R$. Then we have that $d$ is a pure injective map and $\Coker d$ is flat by \cite[Lemma 2.1.4]{H1}. Hence $J$ is also flat. Therefore $J$ is projective and $R$ is a hereditary ring.
\end{proof}

\begin{remark}\label{*hereditary.rmk} The converse of \Cref{*hereditary.prop}  is false.  For example, let $R=\Bbb Z$, then $R$ is a hereditary ring, but $\Tor^0_R(k(0),J)\neq 0$ for any non-zero ideal $J$ of $R$.
\end{remark}

 \begin{theorem}\label{*Gorenstein.thm} The following assertions are equivalent.
 \begin{enumerate}
 \item[\rm(1)]  $R_{\fp}$ is Gorenstein for any $\fp \in \Spec R$.
\item[\rm(2)]  $\Tor_{i}^R(k(\fp),E(R/\fp))=0$ for any $\fp \in \Spec R$ and any $i$ sufficiently large.
\item[\rm(3)]  $\Tor_{i}^R(k(\fm),E(R/\fm))=0$ for any $\fm \in \Max R$ and any $i$ sufficiently large.
 \item[\rm(4)] $\Ext^{i}_R(E(R/\fp), k(\fp))=0$ for any $\fp \in \Spec R$ and any $i$ sufficiently large.
 \item[\rm(5)] $\Ext^{i}_R(E(R/\fm), k(\fm))=0$ for any $\fp \in \Max R$ and any $i$ sufficiently large.
 \end{enumerate} 
\end{theorem}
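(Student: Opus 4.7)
The plan is to reduce the theorem to a pointwise question on the local rings $R_\fp$, and then combine Matlis duality with Bass's characterization of Gorenstein local rings.

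First I would localize. Since both $E(R/\fp)$ and $k(\fp)$ are already $R_\fp$-modules, one has
$$\Tor_i^R(k(\fp),E(R/\fp))\cong \Tor_i^{R_\fp}(k(\fp),E(R/\fp)) \quad \text{and}\quad \Ext_R^i(E(R/\fp),k(\fp))\cong \Ext_{R_\fp}^i(E(R/\fp),k(\fp))$$
for every $\fp\in\Spec R$ and $i\ge 0$. Hence each of (2)--(5) becomes a statement about the local ring $(R_\fp,\fp R_\fp,k(\fp))$ with Matlis injective $E=E(R/\fp)=E_{R_\fp}(k(\fp))$.

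The key local claim I would establish is: for any Noetherian local ring $(R,\fm,k)$ with $E=E(k)$ and every $i\ge 0$,
$$\Tor_i^R(k,E)=0 \iff \Ext_R^i(k,R)=0 \iff \Ext_R^i(E,k)=0.$$
Using the standard identifications $\Hom_R(E,E)\cong\hat R$ (the $\fm$-adic completion) and $\Hom_R(k,E)\cong k$, a projective resolution of $k$ by finite-rank free modules together with hom-tensor adjunction yields
$$\Hom_R(\Tor_i^R(k,E),E)\cong \Ext_R^i(k,\hat R)\cong \hat R\otimes_R \Ext_R^i(k,R)$$
and
$$\Ext_R^i(E,k)\cong \Ext_R^i(E,\Hom_R(k,E))\cong \Hom_R(\Tor_i^R(k,E),E).$$
Since $\Tor_i^R(k,E)$ is Artinian---being a subquotient of a finite direct sum of copies of $E$---Matlis duality $\Hom_R(-,E)$ is faithful on it; combined with faithful flatness of $\hat R$, the three vanishings are equivalent.

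Combining this with Bass's theorem that a Noetherian local ring is Gorenstein if and only if $\Ext_R^i(k,R)=0$ for all $i\gg 0$, I obtain the pointwise equivalences ``$R_\fp$ is Gorenstein $\iff$ (2) holds at $\fp$ $\iff$ (4) holds at $\fp$''. The implications (1) $\Rightarrow$ (2), (1) $\Rightarrow$ (4), (2) $\Rightarrow$ (3), and (4) $\Rightarrow$ (5) are then immediate. For (3) $\Rightarrow$ (1) and (5) $\Rightarrow$ (1), the hypothesis forces $R_\fm$ to be Gorenstein for every $\fm\in\Max R$; since every prime $\fp$ is contained in some maximal ideal and Gorensteinness is preserved under localization, $R_\fp$ is Gorenstein for all $\fp$.

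The main obstacle will be verifying that $\Tor_i^R(k,E)$ is Artinian so that Matlis duality genuinely detects its vanishing; this rests on the noetherian hypothesis providing $k$ with a resolution by free modules of finite rank. Once this is in place, the remainder is the bookkeeping of standard adjunctions and the passage from maximal to general primes via localization of Gorenstein rings.
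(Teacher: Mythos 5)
Your proposal is correct, but it takes a genuinely different route from the paper. The paper treats the Gorenstein condition through flat dimension: it quotes Xu's result that $R_\fp$ is Gorenstein for all $\fp$ iff $\flatdim_R E(R/\fp)<\infty$ for all $\fp$ iff $\flatdim_R E(R/\fm)<\infty$ for all $\fm\in\Max R$, and then, for $(3)\Rightarrow(1)$, observes that $E(R/\fm)_\fp=0$ for $\fp\neq\fm$, so the Tor-vanishing hypothesis at the single prime $\fm$ propagates to all residue fields and \Cref{*flat-proj.cor} (the paper's main acyclicity test) yields $\flatdim_R E(R/\fm)\leqslant n$. You instead localize at once, reducing to a local ring $(R_\fp,\fp R_\fp,k(\fp))$, and prove the local equivalence $\Tor_i(k,E)=0\iff\Ext^i(k,R)=0\iff\Ext^i(E,k)=0$ via $\Hom_R(\Tor_i^R(k,E),E)\cong\Ext_R^i(k,\hat R)\cong\hat R\otimes_R\Ext_R^i(k,R)$ together with faithful flatness of $\hat R$, then invoke the classical characterization $\injdim_RR=\sup\{i:\Ext_R^i(k,R)\neq 0\}$. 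This bypasses both Xu's theorem and the paper's \Cref{*flat-proj.cor} entirely, at the cost of relying on the Bass/Bruns--Herzog criterion; the paper's route, by contrast, showcases its own machinery. Your handling of $(2)\Leftrightarrow(4)$ and $(3)\Leftrightarrow(5)$ via the adjunction $\Ext_R^i(E,k)\cong\Hom_R(\Tor_i^R(k,E),E)$ coincides with the paper's. Two small remarks: the Artinianness of $\Tor_i^R(k,E)$ that you flag as the main obstacle is a non-issue, since $E$ is an injective cogenerator over $R_\fp$ and hence $\Hom_{R_\fp}(-,E)$ is faithful on \emph{all} $R_\fp$-modules (though your subquotient argument is also fine); and the identification $\Ext_R^i(E(R/\fp),k(\fp))\cong\Ext_{R_\fp}^i(E(R/\fp),k(\fp))$ deserves a word of justification (localize an $R$-projective resolution of $E(R/\fp)$ and use adjunction), since Ext in the first variable does not commute with localization for arbitrary modules.
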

\begin{proof} It is known from \cite[Proposition 2.1]{Xu} that $R_{\fp}$ is Gorenstein for any $\fp \in \Spec R$ if and only if $ \flatdim_R E(R/\fp)<\infty$ for any $\fp \in \Spec R$ if and only if $ \flatdim_R E(R/\fm)<\infty$ for any $\fm \in \Max R$.

$(1)\Rightarrow (2)$,  $(2)\Rightarrow (3)$ and $(4)\Rightarrow (5)$ are obvious.

$(3)\Rightarrow (1)$ Given $\fm \in \Max R$, by assumption there is an integer $n$ such that $\Tor_{i}^R(k(\fm),E(R/\fm))=0$ for any $i>n$. If $\fp\in \Spec R\backslash \{\fm\}$, then $\Tor_{\geqslant 0}^R( k(\fp), E(R/\fm))\cong \Tor_{\geqslant 0}^{R_\fp}( k(\fp), E(R/\fm)_{\fp})=0$. Thus \Cref{*flat-proj.cor}  implies that $\flatdim_R E(R/\fm) \leqslant n$. Hence (1) is true.

$(2)\Leftrightarrow (4)$ and $(3)\Leftrightarrow (5)$ follow from the isomorphisms $$\Ext^{i}_R(E(R/\fp), k(\fp))\cong \Ext^{i}_R(E(R/\fp), \Hom_{R_\fp}(k(\fp), E(R/\fp)))$$
$$\,\,\,\,\,\,\,\,\,\,\,\,\,\,\,\,\,\,\,\,\,\,\,\,\,\,\,\,\,\,\,\,\,\,\,\,\,\,\,\,\,\,\,\,\,\,\,\,\,\,\,\,\,\,\,\,\,\cong \Hom_{R_\fp}(\Tor_{i}^R(k(\fp), E(R/\fp)),E(R/\fp)). $$
\end{proof}

\paragraph
Recall from \cite{AB} that the codimension of  any $M\in \mod R$, denoted by $\codim_R M$,  is defined to be the least upper bound of lengths of $M$-regular sequences.
\begin{corollary}\label{*selfinjective.cor}  If $I$ is an ideal of $R$ such that $I\subset \rad(R)$, then the following  assertions are equivalent.
\begin{enumerate}
  \item[\rm(1)] $R$ is self-injective.
     \item[\rm(2)] $\codim_R R=0$  and $E(R)$ is  relatively-$I$-flat.
   \item[\rm(3)] $\codim_R R=0$ and $E(F)$ is  relatively-$I$-flat for every flat $R$-module $F$.    
       \end{enumerate}
\end{corollary}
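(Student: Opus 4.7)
My plan is to prove the cycle $(1) \Rightarrow (3) \Rightarrow (2) \Rightarrow (1)$, with $(3) \Rightarrow (2)$ immediate on taking $F = R$.

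For $(1) \Rightarrow (3)$, I would use the classical fact that a commutative noetherian self-injective ring is quasi-Frobenius, hence Artinian; in particular $\codim_R R \leq \dim R = 0$. Over an Artinian ring every flat module is projective, and, since $R$ is injective and arbitrary direct sums of injectives remain injective in the noetherian setting, every free module $R^{(X)}$ is injective. Any flat $F$, being a direct summand of such a free module, is therefore injective, so $E(F) = F$ is flat and hence relatively-$I$-flat.

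For $(2) \Rightarrow (1)$, the key preparatory observation is that $\codim_R R = 0$ forces $\Max R \subseteq \Ass R$: the set of non-units is contained in $\bigcup_{\fp \in \Ass R}\fp$, so prime avoidance applied to a maximal ideal $\fm$ produces $\fp \in \Ass R$ with $\fm \subseteq \fp$, and maximality forces $\fm = \fp$. Via the canonical decomposition $E(R) \cong \bigoplus_{\fp \in \Ass R} E(R/\fp)^{(\mu_0(\fp, R))}$, each $E(R/\fm)$ with $\fm \in \Max R$ is a direct summand of $E(R)$, and hence is itself relatively-$I$-flat. Because $I \subseteq \fm$, \Cref{*flat-dimension.prop} (applied with $\mathscr{L} = 0$) yields $\Tor_i^R(k(\fm), E(R/\fm)) = 0$ for all $i \geq 1$ and every $\fm \in \Max R$. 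The implication $(3) \Rightarrow (1)$ of \Cref{*Gorenstein.thm} then forces $R_\fp$ to be Gorenstein for every $\fp \in \Spec R$. Combined with $\depth R_\fm = 0$ (from $\fm \in \Ass R$) and the Gorenstein equality $\dim R_\fm = \depth R_\fm$, we obtain $\dim R_\fm = 0$ for every maximal $\fm$; so $R$ is Artinian Gorenstein, i.e.\ quasi-Frobenius, and therefore self-injective.

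The crux is bridging a hypothesis on the single module $E(R)$ with the Gorenstein criterion of \Cref{*Gorenstein.thm}, which requires vanishing data at every maximal $\fm$. The hypothesis $\codim_R R = 0$ plays exactly that role: it guarantees that each $E(R/\fm)$ appears as a direct summand of $E(R)$, so relatively-$I$-flatness descends from $E(R)$ to every $E(R/\fm)$, supplying the input needed to invoke \Cref{*Gorenstein.thm} and close the cycle.
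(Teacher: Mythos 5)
Your proof is correct and follows essentially the same route as the paper: the nontrivial implication $(2)\Rightarrow(1)$ is handled exactly as in the text, by using $\codim_R R=0$ to realize each $E(R/\fm)$ as a direct summand of $E(R)$ and then invoking \Cref{*flat-dimension.prop} and \Cref{*Gorenstein.thm}. The only (harmless) differences are cosmetic: for $(1)\Rightarrow(3)$ you show flat modules are injective so $E(F)=F$, while the paper shows $E(F)$ is flat; and at the end you pass through $\depth R_\fm=\dim R_\fm=0$ rather than the paper's direct appeal to flatness of $E(R/\fm)$.
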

\begin{proof} $(1)\Rightarrow (3)$ If $R$ is injective, then every injective module is flat. Thus $E(F)$ is flat.  For any $\fm\in \Max R$,  $R_{\fm}$ is injective and hence $\codim_{R_{\fm}}R_{\fm}=0$ by \cite[Theorem 3.1.17]{BH}. As $\codim_R R=\sup \{\codim_{R_{\fm}}{R_{\fm}}\mid \fm\in \Max R\}$ by \cite[Theorem 1.6]{AB},  we have that $\codim_R R=0$.

$(3)\Rightarrow (2)$ is obvious.

$(2)\Rightarrow (1)$ For any $\fm \in \Max R$, since $\codim_R R=0$, $R/\fm$ can be embedded into $R$. Then $E(R/\fm)$ is a direct summand of $E(R)$. Thus $E(R/\fm)$ is relatively-$I$-flat.  As $I\subset \rad(R)$, $\fm\in V(I)$.  It follows from \Cref{*flat-dimension.prop} that $\Tor_i^R(k(\fm), E(R/\fm))=0$ for $i\geqslant 1$.  By \Cref{*Gorenstein.thm} we have that  $R_{\fp}$ is injective for any $\fp \in \Spec R$. Therefore $R$ is injective.
\end{proof}

\begin{flushleft}
Mitsuyasu Hashimoto\\
Department of Mathematics\\
Osaka Metropolitan University\\
Sumiyoshi-ku, Osaka 558--8585, Japan\\
e-mail: {\tt mh7@omu.ac.jp}
\end{flushleft}

\begin{flushleft}
Xi Tang\\
 School of Science\\
Guilin University of Aerospace Technology\\
Guilin 541004, Guangxi Province, P. R. China\\
e-mail: {\tt tx5259@sina.com.cn}
\end{flushleft}

\end{document}